\newcommand{\Sp}{\mathrm{Sp}}
\newcommand{\SL}{\mathrm{SL}}
\newcommand{\GL}{\mathrm{GL}}
\newcommand{\Mp}{\mathrm{Mp}}
\newcommand{\Oo}{\mathrm{O}}
\newcommand{\SO}{\mathrm{SO}}
\newcommand{\GSO}{\mathrm{GSO}}
\newcommand{\PGL}{\mathrm{PGL}}
\newcommand{\Hom}{\mathrm{Hom}}
\newcommand{\Gal}{\mathrm{Gal}}
\newtheorem{thm}{Theorem}[section]
\newtheorem{lem}[thm]{Lemma}
\newtheorem{coro}[thm]{Corollary}
\theoremstyle{remark}
\newtheorem{rem}[thm]{Remark}
\theoremstyle{definition}
\newtheorem{defn}[thm]{Definition}
\numberwithin{equation}{section}
\def\iddots{\mathinner{\mkern1mu\raise\p@
	\hbox{.}\mkern2mu\raise4\p@\hbox{.}\mkern2mu
	\raise7\p@\vbox{\kern7\p@\hbox{.}}\mkern1mu}}
\def\adots{\mathinner{\mkern2mu\raise\p@\hbox{.}
 \mkern2mu\raise4\p@\hbox{.}\mkern1mu
 \raise7\p@\vbox{\kern7\p@\hbox{.}}\mkern1mu}}
\title{The $\SL_1(D)$-distinction problem}
\author{Hengfei LU}
\address{School of Mathematics,	Tata Institute of Fundamental Research, Dr. Homi Bhabha Road,	Colaba, Mumbai 400005, INDIA}
\email{hengfei@math.tifr.res.in}
\begin{document}
\maketitle

\begin{abstract}
We use the local theta correspondences between the quaternionic Hermitian groups and the quaternionic skew-Hermitian groups to understand the distinction problem for the symmetric pair $\SL_2(E)/\SL_1(D)$, where $E$ is a quadratic field extension of a nonarchimedean local field  extension $F$ and $D$ is a $4$-dimensional division quaternion algebra over $F$. 
\end{abstract}
	\subsection*{Keywords} theta lifts, distinction problems, division quaternion algebra
\subsection*{MSC(2000)} 11F27$\cdot$11F70$\cdot$22E50
\tableofcontents
\section{Introduction}
Distinction problems are very popular in representation theory. Let $F$ be a finite field extension over $\mathbb{Q}_p$. Let $G$ be a reductive group defined over $F$.
Let $H$ be a closed subgroup of $G$. Given a smooth represenation $\pi$ of $G(F)$ and a character $\chi_H$ of $H(F)$, if $\dim\Hom_{H(F)}(\pi,\chi_H)$ is nonzero, then $\pi$ is called $(H(F),\chi_H)$-distinguished. Furthermore, if $\chi_H$ is a trivial character, then $\pi$ is called $H(F)$-distinguished. There is a rich literature, such as
\cite{adler2006on,flicker1994quaternionic,prasad2006sl2,prasad2015arelative,prasadcomp}, trying to classify all $H(F)$-distinguished representations of $G(F)$. In this paper, we will focus on the case $G=R_{E/F}\SL_2$, $H=\SL_1(D)$ and $\chi_H$ is trivial, where $E/F$ is a quadratic field extension, $D$ is the unique $4$-dimensional quaternion division algebra defined over $F$ and $R_{E/F}$ denotes the Weil restriction of scalars. 

Let $E$ be a quadratic field extension of a nonarchimedean local field $F$ of charactristic $0$. Let $W_E$ (resp. $W_F$) be the Weil group of $E$ (resp. $F$) and $WD_E$ (resp. $WD_F$)
be the Weil-Deligne group of $E$ (resp. $F$). Let $G$ be a quasi-split reductive group defined over $F$ with Langlands dual group $\hat{G}$. Let $\pi$ be an irreducible smooth representation of $G(E)$ with enhanced Langlands parameter $(\phi_{\pi},\lambda)$, where
\[\phi_{\pi}:WD_E\longrightarrow \hat{G}(\mathbb{C})\rtimes W_E \]
is the Langlands parameter and $\lambda$ is a character of the component group $\pi_0(C_{\hat{G}}(\phi_{\pi}))$, where $C_{\hat{G}}(\phi_{\pi})$ is the centralizer of $\phi_{\pi}$ in $\hat{G}$. 
In \cite{prasad2015arelative},
Dipendra Prasad formulates a conjectural identity for the dimension $\dim\Hom_{G_\alpha(F)}(\pi,\chi_G) $, in terms of the Langlands parameter $\tilde{\phi}$ of $G^{op}$ satisfying $\tilde{\phi}|_{WD_E}=\phi_{\pi}$, where $G^{op}$ is a quasi-split group defined in \cite[\S9]{prasad2015arelative}, $G_\alpha$ is the pure inner form of $G$ and $\chi_G$ is a quadratic character of $G(F)$ defined in \cite[\S10]{prasad2015arelative}.
\par
  It is natural to ask what happens if $G_\alpha$ is an inner form of $G$ satisfying $G_\alpha(E)=G(E)$. There is a well-known result of Dipendra Prasad \cite{Dipendra1992invariant} and Jeffrey Hakim \cite{Hakim1991dist} about $D^\times$-distinguished representation $\pi$ of $\GL_2(E)$.
\begin{thm}\cite[Theorem C]{Dipendra1992invariant}
	Let $\pi$ be a square-integrable representation of $\GL_2(E)$, then $\pi$ is $D^\times$-distinguished if and only if $\pi$ is $\GL_2(F)$-distinguished.
\end{thm}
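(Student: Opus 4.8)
The plan is to put $\GL_2(F)$ and $D^\times$ on an equal footing as the fixed-point subgroups of one and the same Galois involution acting on the two $F$-forms of $M_2(E)$. Since $E$ splits $D$, there are $F$-algebra isomorphisms $M_2(F)\otimes_F E\cong M_2(E)\cong D\otimes_F E$, and in either realization the subgroup fixed by $\theta:=1\otimes\sigma$, where $\sigma$ generates $\Gal(E/F)$, is $\GL_2(F)$, respectively $D^\times$. Thus $(\GL_2(E),\GL_2(F))$ and $(\GL_2(E),D^\times)$ are symmetric pairs attached to the \emph{same} $\theta$, and both are known to have multiplicity at most one, so $\dim\Hom_{\GL_2(F)}(\pi,\mathbb{C})\le 1$ and $\dim\Hom_{D^\times}(\pi,\mathbb{C})\le 1$. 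As the centre of each subgroup is $F^\times$, embedded as scalars in $E^\times$, a first common necessary condition for distinction by either group is $\omega_\pi|_{F^\times}=1$, where $\omega_\pi$ is the central character. This reduces the theorem to proving that, under this constraint, the two $\Hom$-spaces vanish simultaneously.

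First I would pin down the $\GL_2(F)$-side by base change. For square-integrable $\pi$ the criterion going back to Flicker and Hakim says that $\pi$ is $\GL_2(F)$-distinguished exactly when $\pi$ is conjugate-self-dual, $\phi_\pi^{\sigma}\cong\phi_\pi^{\vee}$, with the Asai $L$-function $L(s,\phi_\pi,\mathrm{As})$ having a pole, equivalently when $\pi$ is a base-change lift from the quasi-split unitary group $U_2$. The point of recording this is that conjugate-self-duality of $\phi_\pi$, together with $\omega_\pi|_{F^\times}=1$, is a condition intrinsic to the parameter and blind to the inner form; it is therefore the natural candidate for the common criterion governing both $\GL_2(F)$- and $D^\times$-distinction.

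The heart of the argument is to show that this same parameter condition controls $D^\times$-distinction, and here I would compare the two period functionals through Harish-Chandra character theory. For square-integrable $\pi$ the matrix coefficients are integrable modulo centre, so the $H$-period $\int_{H/F^\times}\langle\pi(h)v,\tilde v\rangle\,dh$ of a matrix coefficient converges and defines an $H$-invariant functional that is nonzero precisely when $\pi$ is $H$-distinguished. Applying the Weyl integration formula on $H$ expresses this period as a sum over the elliptic tori $K^\times$ of $H$ (with $K/F$ a quadratic field) of integrals built from $\Theta_\pi$. The crucial observation is that $\GL_2(F)$ and $D^\times$, being inner forms of one another, admit exactly the same quadratic fields $K/F$, and the resulting tori $K^\times\subset\GL_2(E)$ are $\GL_2(E)$-conjugate by Skolem--Noether over $E$; since $\Theta_\pi$ is a fixed class function on $\GL_2(E)$, the elliptic contributions to the two periods are literally the same integrals. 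Thus, once the central-character condition holds, the nonvanishing of one period should force the nonvanishing of the other, which with multiplicity one yields the equivalence.

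The main obstacle is the non-elliptic part and the fine sign. The split group $\GL_2(F)$ carries a split torus and unipotent orbits absent from the anisotropic $D^\times$, and one must show that for discrete series these extra terms neither create nor destroy distinction; this is exactly where square-integrability is indispensable, through the decay of matrix coefficients and the vanishing of the relevant character germs along the split and degenerate orbits. Equivalently, in base-change language, the delicate point is to check that the Asai sign, i.e. the choice between $\mathrm{As}^{+}$ and $\mathrm{As}^{-}$ (between distinction with trivial character and with $\eta_{E/F}\circ\det$), is pinned to the same value by both inner forms. I expect to resolve this by a see-saw identity for the theta correspondences used elsewhere in this paper, which repackages the two periods as lifts to a common quaternionic (or orthogonal) group and forces their simultaneous occurrence.
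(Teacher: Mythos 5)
The first thing to say is that this statement is not proved in the paper at all: it is imported verbatim as \cite[Theorem C]{Dipendra1992invariant}, with Hakim's paper \cite{Hakim1991dist} as a companion reference, and the only internal commentary is the remark citing Beuzart-Plessis \cite{beuzart2017distinguished}. So there is no in-paper proof to compare yours against; your attempt must stand on its own, and as it stands it is a strategy outline whose decisive step is missing. Two smaller inaccuracies first. The involutions cutting out $\GL_2(F)$ and $D^\times$ are not literally ``the same $\theta$'': transporting $1\otimes\sigma$ through the two isomorphisms $M_2(F)\otimes_F E\cong M_2(E)\cong D\otimes_F E$ produces two involutions of $\GL_2(E)$ that differ by an inner twist. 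More seriously, conjugate-self-duality together with $\omega_\pi|_{F^\times}=\mathbf{1}$ is necessary but \emph{not} sufficient for $\GL_2(F)$-distinction: such a $\pi$ may instead be $(\GL_2(F),\omega_{E/F}\circ\det)$-distinguished. Hence there is no purely parameter-theoretic ``common criterion'' to reduce to; deciding which Asai sign occurs for each of the two subgroups \emph{is} the theorem, not a reformulation of it.

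The genuine gap is in the period comparison. On the $D^\times$ side the multiplicity is an honest convergent integral $\int_{D^\times/F^\times}\Theta_\pi(h)\,dh$ because $D^\times/F^\times$ is compact; on the $\GL_2(F)$ side there is no such elementary formula. The equivalence ``distinguished if and only if some matrix-coefficient period is nonvanishing'' for a noncompact symmetric subgroup is itself a nontrivial theorem, and converting that period into character data is not an application of the Weyl integration formula: it requires the geometric expansion of a relative local trace formula, whose geometric side contains, besides the elliptic-torus terms you match, contributions from the split torus and from germs at the identity. Proving that these extra terms do not spoil the equivalence is exactly where the entire difficulty sits --- this is the content of \cite{beuzart2017distinguished} --- and your sketch hands it off to ``decay of matrix coefficients'', ``vanishing of character germs'', and an unspecified see-saw identity, none of which is carried out. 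That this cannot be finessed is visible inside the present paper: by the remark at the end of \S 3, the irreducible principal series $\pi(\chi,\chi\chi_E)$ with $\chi|_{F^\times}=\mathbf{1}$, $\chi_E|_{F^\times}=\mathbf{1}$ and $\chi_E^2\neq\mathbf{1}$ is $\GL_2(F)$-distinguished but not $\GL_1(D)=D^\times$-distinguished, even though your elliptic heuristic (same quadratic fields $K/F$, tori conjugate in $\GL_2(E)$, one class function $\Theta_\pi$) applies to it word for word. So any correct proof must use square-integrability quantitatively, through the precise shape of the two multiplicity formulas, and not merely through convergence of periods; your proposal does not reach that point.
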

\begin{rem}
	Raphael Beuzart-Plessis \cite{beuzart2017distinguished}  generalizes this result \cite[Theorem C]{Dipendra1992invariant} to any inner form $G'$ of a quasi-split reductive group $G$ for the stable square-integrable representations. More precisely, let $E$ be a quadratic field extension over a nonarchimedean local field $F$. Given a quadratic character $\chi_{G,E}$ of $G(F)$ and a quadratic character $\chi_{G',E}$ of $G'(F)$, suppose that the stable square-integrable representations $\pi$ of $G(E)$ and $\pi'$ of $G'(E)$ are matching, then there exists an identity \[\dim \Hom_{G(F)}(\pi,\chi_{G,E})= \dim\Hom_{G'(F)}(\pi',\chi_{G',E}).  \]
\end{rem}
\par
Let us fix a element $\epsilon\in F^\times\backslash N_{E/F}E^\times$. Let $\SL_1(D)$ be the inner form of $\SL_2(F)$, which is not a quasi-split $F$-group, and there exists an embedding
\begin{equation}\label{matrixembedding}
\SL_1(D)=\Big\{g=\begin{pmatrix}
\bar{x}&\epsilon\bar{y}\\y&x
\end{pmatrix}|\det(g)=1,x,y\in E \Big\}\subset\SL_2(E) 
\end{equation}
where $\bar{x}=a-b\sqrt{\varpi}$ if $x=a+b\sqrt{\varpi}$ with $a,b\in F$ and $E=F[\sqrt{\varpi}]$, $\varpi\in F^\times\setminus{F^\times}^2$.
This paper will mainly discuss the distinction problem for $\SL_1(D)$ over a quadratic field extension $E/F$, i.e. the multiplicity
\[\dim\Hom_{\SL_1(D)}(\tau,\mathbb{C} ). \]
\par
Let $V_D$ be a $n$-dimensional Hermitian  $D$-vector space with Hermitian form $h$, then $$Aut(V_D,h)=\{g\in \GL_n(D)|h(gv_1,gv_2)=h(v_1,v_2),\forall v_1,v_2\in V \},$$
where $n=\dim_D V_D$. Assume that
\[h(v_1,v_2)=f(v_1,v_2)+B(v_1,v_2)j \]
where $D=E\oplus Ej, je=\bar{e}j$, $f(v_1,v_2)\in E$ and $B(v_1,v_2)\in E$ for $v_1,v_2\in V$.
Moverover, $f(v_1,v_2)=B(v_1,v_2j)$ and $f(v_1,v_2j)=B(v_1,v_2)j^2$, see \cite[\S10.3]{Scharlau1985}. Furthermore,
 regarding $V_D$ as a $2n$-dimensional vector space $\mathcal{V}_E$ over $E$, then $f$ (resp. $B$) is a non-degenerate Hermitian (resp. symplectic) form on $\mathcal{V}_E$ and
$Aut(\mathcal{V}_E,f)\cong U_{2n}(E/F)$ (resp. $Aut(\mathcal{V}_E,B)\cong\Sp_{2n}(E)$). Let $n=1$, we obtain a group embedding 
\begin{equation}\label{quaterembed}
\SL_1(D)=Aut(V_D,h)\hookrightarrow Aut(\mathcal{V}_E,B)=\SL_2(E) 
\end{equation}
which is the same as the embedding in \eqref{matrixembedding}. However the embedding \eqref{quaterembed} will be convenient for us when we use the local theta correspondence over the quaternionic unitary groups to deal with the distinction problem $\Hom_{\SL_1(D)}(\tau,\mathbb{C}) $.
\begin{thm}\label{localmain}
	Suppose that $\tau$ is an irreducible $\SL_1(D)$-distinguished representation of $\SL_2(E)$.
	\begin{enumerate}[(i).]
		\item If $\tau$ is a square-integrable representation, then  $$\dim\Hom_{\SL_1(D)}(\tau,\mathbb{C} )=\begin{cases}
		2,&\mbox{if }|\Pi_{\phi_\tau}|=2;\\
		1,&\mbox{otherwise}.
		\end{cases} $$
		Here $|\Pi_{\phi_\tau}|$ denotes the size of the $L$-packet $\Pi_{\phi_\tau}$.
		\item If $\tau=I(\chi|-|^z_E)$ is a principal series representation, then $\dim\Hom_{\SL_1(D)}(\tau,\mathbb{C} )=2$.
		\item If $\tau\subset I(\omega_{K/E})$, then $\dim\Hom_{\SL_1(D)}(\tau,\mathbb{C} )=1$.
	\end{enumerate}
\end{thm}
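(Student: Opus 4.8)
The plan is to run everything through the local theta correspondence for the quaternionic dual pairs and a single see-saw identity, reducing the three cases to multiplicity statements I can already control: the Prasad--Hakim dichotomy \cite[Theorem~C]{Dipendra1992invariant} at the level of $\GL_2(E)/D^\times$, a Mackey--orbit computation for the principal series, and a Clifford-theoretic descent from $\GL_2(E)$ to $\SL_2(E)$ keyed to the size of the $L$-packet $\Pi_{\phi_\tau}$. Concretely, I regard $\SL_1(D)=Aut(V_D,h)$ as the rank-one quaternionic Hermitian group and pair it with a quaternionic skew-Hermitian group $U(W_D)$ in the symplectic $F$-space $V_D\otimes_D W_D$; viewing $V_D$ over $E$ realizes $\SL_2(E)=\Sp(\mathcal V_E)$ as the ambient group of a dual pair with an orthogonal $E$-group $\Oo(U)$, where $\mathcal V_E\otimes_E U$ and $V_D\otimes_D W_D$ are identified as symplectic $F$-spaces. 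These fit into a see-saw
\[
\begin{array}{ccc}
\SL_2(E) & & U(W_D)\\
\cup & & \cap \\
\SL_1(D) & & \Oo(U),
\end{array}
\]
whose see-saw identity converts $\Hom_{\SL_1(D)}(\tau,\mathbb C)$ into a Hom-space on the orthogonal/quaternionic side expressed through the theta lift of $\tau$, after realizing the trivial character of $\SL_1(D)$ inside the theta lift of the trivial representation of a suitably small $U(W_D)$.

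First I would settle the containing $\GL$-problem, which supplies the stable input. An irreducible $\tau$ of $\SL_2(E)$ is a constituent of $\tilde\tau|_{\SL_2(E)}$ for some irreducible $\tilde\tau$ of $\GL_2(E)$, and $\SL_1(D)=D^\times\cap\SL_2(E)$ with $D^\times/\SL_1(D)\cong F^\times$ via the reduced norm. For $\tilde\tau$ square-integrable, \cite[Theorem~C]{Dipendra1992invariant} gives $\dim\Hom_{D^\times}(\tilde\tau,\mathbb C)=1$ precisely when $\tilde\tau$ is $\GL_2(F)$-distinguished, i.e. lies in the image of base change from $\GL_2(F)$; this is the one-dimensional ``core'' functional. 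The theta/see-saw step then lets me express the full $\SL_1(D)$-invariant space as this core functional together with its twist under the order-two group $D^\times/(E^\times\SL_1(D))\cong F^\times/N_{E/F}E^\times$, which is exactly where the multiplicity can be enlarged beyond the $\GL$-value to at most $2$.

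Next I would descend to $\SL_2(E)$ by Clifford theory. The constituents of $\tilde\tau|_{\SL_2(E)}$ form the packet $\Pi_{\phi_\tau}$, indexed by characters of the relevant component group, and the $D^\times$-action permutes both the constituents and the distinguishing functionals. Counting how the core functional and its admissible twist distribute over $\Pi_{\phi_\tau}$ yields, for square-integrable $\tau$, the value $2$ exactly when $|\Pi_{\phi_\tau}|=2$ and $1$ when the packet is stable ($|\Pi_{\phi_\tau}|=1$) or of the exceptional size $4$; this proves (i). For the irreducible principal series $\tau=I(\chi|-|^z_E)$ I would instead compute directly by Mackey theory: analyzing the orbits of $\SL_1(D)$ on the flag variety $\mathbb P^1(E)=\SL_2(E)/B$ produces a two-dimensional space of invariant functionals supported on the open orbit, giving (ii). For $\tau\subset I(\omega_{K/E})$, the quadratic character $\omega_{K/E}$ forces the induced representation to reduce, and the open-orbit functional space splits across the two constituents, leaving multiplicity $1$ on each and proving (iii).

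The hard part will be the multiplicity bookkeeping in the square-integrable descent, namely pinning the dichotomy ``$2$ iff $|\Pi_{\phi_\tau}|=2$'' rather than some other distribution. This hinges on (a) identifying the correct finite abelian group whose characters index the splitting and noticing that the \emph{division} input from $D$ alters the relevant norm/Galois-cohomology compared with the split symmetric pair $\SL_2(E)/\SL_2(F)$ treated in \cite{prasad2006sl2}, and (b) verifying that the see-saw transfers these functionals integrally, with no collapse or cancellation under theta. I expect the local root-number criterion underlying \cite[Theorem~C]{Dipendra1992invariant} to resurface as the obstruction deciding which characters of the component group actually support an $\SL_1(D)$-functional, and I would finish by matching the resulting counts against Prasad's conjectural formula from \cite{prasad2015arelative}, checking in particular that the exceptional packet of size $4$ does not upgrade the multiplicity.
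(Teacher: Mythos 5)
Your framework (quaternionic dual pairs, see-saw, Mackey theory for the principal series) is the same as the paper's, and your Mackey step does prove (ii) essentially as in the paper's case (B) — except that the two-dimensionality comes from \emph{two} orbits of $\SL_1(D)$ on $B(E)\backslash\SL_2(E)$, each contributing $\Hom_{E^1}(\chi,\mathbb{C})$ (since $\SL_1(D)$ is anisotropic all orbits are both open and closed), not from a single open orbit. The genuine gap is in your treatment of (i) and (iii): the ``Prasad--Hakim core functional plus Clifford-theoretic descent'' can only compute the \emph{sum} of multiplicities over the packet (indeed $\dim\Hom_{\SL_1(D)}(\tilde\tau,\mathbb{C})=\sum_{\chi}\dim\Hom_{D^\times}(\tilde\tau,\chi\circ N_{D/F})$, which is the paper's Theorem 1.6, not Theorem 1.5), and Clifford theory is powerless to distribute that sum over the members. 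The multiplicity $\tau'\mapsto\dim\Hom_{\SL_1(D)}(\tau',\mathbb{C})$ is constant on orbits of the conjugation action of $D^\times$ on $\Pi_{\phi_\tau}$, and this action factors through $\det(D^\times)=F^\times$ acting on the torsor $E^\times/\bigcap_{\eta\in X(\tilde\tau)}\ker\eta$; but whenever $\tau$ is $\SL_1(D)$-distinguished the relevant self-twist characters $\eta$ factor through $N_{E/F}$, hence are trivial on $F^\times$, so the action is trivial in exactly the delicate cases and Clifford theory imposes no constraint at all. Worse, any such formal argument is identical for the pair $\SL_2(E)/\SL_2(F)$, yet the answers genuinely differ there: the paper's tables record $\{1,1,1,1\}$ for $\SL_1(D)$ versus $\{4,0,0,0\}$ for $\SL_2(F)$ in the $p=2$, $|\Pi_{\phi_\tau}|=4$ case, and $\{1,1\}$ versus $\{3,0\}$ in case (iii) with $\chi_F^2=\mathbf{1}$; likewise your method cannot decide between $\{2,0\}$ and $\{1,1\}$ when $|\Pi_{\phi_\tau}|=2$. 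So the step you flag as ``multiplicity bookkeeping'' is not bookkeeping: it is the theorem, and no root-number criterion inherited from the $\GL_2$ level can see it, since that level only controls the total.

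What the paper does instead, and what your plan is missing, is a member-by-member computation: each constituent $\tau^{\pm}$ is realized as the theta lift of an explicit character $\mu_1^{\pm}$ of $\Oo(\mathfrak{V}_E)$ (dihedral datum $K/E$, or $\Oo_{1,1}(E)$ in case (iii)), and then the see-saw against $U_{1,1}(D)$, combined with Yamana's structure theory of the degenerate principal series $\mathcal{I}(\tfrac12)=\Theta_\psi(\mathbf{1})$, converts $\dim\Hom_{\SL_1(D)}(\tau^{\pm},\mathbb{C})$ into torus branching of the form $\dim\Hom_{U(W')}(\mu_1^{\mp},\mathbb{C})$, which can be evaluated separately for each member (Lemma 3.2 and identities (3.2)--(3.3) of the paper). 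Two further cautions: the metaplectic splittings $i_1$ and $i_2$ over the common subgroup $\Oo(\mathfrak{V}_E)$ of a see-saw pair do \emph{not} agree — they differ by the character $\det$ — which is exactly why the paper's key identity carries the twist $\pi\otimes\det$; your naive see-saw identity would be off by this twist. And Prasad--Hakim's Theorem C enters the paper's proof only once, for the non-dihedral supercuspidal case at $p=2$ (via a different see-saw involving $\GSO(2,2)(E)$ and a Hermitian $U(Y)$), not as the engine of the whole argument.
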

\par
Instead of considering each individual dimension, we consider the sum
\[S(\tau)=\sum_{\pi\in\Pi_{\phi_\tau} }\dim\Hom_{\SL_1(D)}(\pi,\mathbb{C})   \]
where $\Pi_{\phi_\tau}$ is the $L$-packet of representations of $\SL_2(E)$ containing  a $\SL_1(D)$-distinguished representation $\tau$.
\begin{thm}\label{thmforsum}
	Assume that $\tau$ is a $\SL_1(D)$-distinguished representation of $\SL_2(E)$ with an $L$-parameter $\phi_\tau$.
	\begin{enumerate}[(i).]
		\item
		Suppose that $\tau$ is a square-integrable representation.
		\begin{enumerate}[(a).]
			\item If $|\Pi_{\phi_\tau}|=1$, i.e. the size of the $L$-packet $\Pi_{\phi_\tau}$ is $1$, then $S(\tau)=1$.
			\item 	 If $|\Pi_{\phi_\tau}|=2$, then only one of them is $\SL_1(D)$-distinguished, the other is not $\SL_1(D)$-distinguished and
			$S(\tau)=2$.
			\item If $|\Pi_{\phi_\tau}|=4$ and $p\neq2$, then two members inside the $L$-packet $\Pi_{\phi_\tau}$ are $\SL_1(D)$-distinguished with the same multiplicity and $S(\tau)=2$.
			\item 	If $|\Pi_{\phi_\tau}|=4$ and $p=2$, then $S(\tau)=2$ or $4$.
		\end{enumerate}
		\item If $\tau$ is an irreducible principal series representation, then $S(\tau)=2$.
		\item If $\tau$ is not discrete but tempered  and $|\Pi_{\phi_\tau}|=2$, then $S(\tau)=2$.
	\end{enumerate}
\end{thm}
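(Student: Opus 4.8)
The plan is to compute $S(\tau)$ by transferring the problem to $\GL_2(E)$, where distinction by $D^\times$ and by $\GL_2(F)$ is controlled by base change, and then to distribute the total using the individual multiplicities from Theorem \ref{localmain}.

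First I would pass from $\SL_2(E)$ to $\GL_2(E)$. Fix an irreducible representation $\tilde\tau$ of $\GL_2(E)$ whose restriction to $\SL_2(E)$ contains $\tau$; since restriction from $\GL_2$ to $\SL_2$ is multiplicity-free and its irreducible constituents are exactly the members of $\Pi_{\phi_\tau}$, one has $\tilde\tau|_{\SL_2(E)}=\bigoplus_{\pi\in\Pi_{\phi_\tau}}\pi$, whence $S(\tau)=\dim\Hom_{\SL_1(D)}(\tilde\tau,\mathbb{C})$. Because $\SL_1(D)$ lies in the kernel of every $\chi\circ\det$, this is independent of the chosen lift. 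Using $D^\times/\SL_1(D)\cong F^\times$ through the reduced norm and Frobenius reciprocity, I would decompose the finite-dimensional space $\Hom_{\SL_1(D)}(\tilde\tau,\mathbb{C})$ under the residual $F^\times$-action to obtain
\[S(\tau)=\sum_{\chi}\dim\Hom_{D^\times}(\tilde\tau,\chi\circ\mathrm{Nrd}),\]
the sum running over characters $\chi$ of $F^\times$ with $\chi^2=\omega_{\tilde\tau}|_{F^\times}$ (matching central characters on the scalar torus $F^\times\subset D^\times$ forces this constraint and makes the sum finite).

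Next I would make the square-integrable sum explicit. Writing $\chi\circ\mathrm{Nrd}=(\tilde\chi\circ\det)|_{D^\times}$ for an extension $\tilde\chi$ of $\chi$ to $E^\times$, each summand is $\dim\Hom_{D^\times}(\tilde\tau_\chi,\mathbb{C})$ with $\tilde\tau_\chi:=\tilde\tau\otimes(\tilde\chi^{-1}\circ\det)$ square-integrable and with central character trivial on $F^\times$. By Theorem C (Prasad--Hakim) this equals $\dim\Hom_{\GL_2(F)}(\tilde\tau_\chi,\mathbb{C})\in\{0,1\}$. A short computation identifies the base-change condition $\tilde\tau_\chi^\sigma\cong\tilde\tau_\chi^\vee$ with $\tilde\tau^\sigma\cong\tilde\tau\otimes(\nu_\chi\circ\det)$, where $\nu_\chi=(\chi\circ N_{E/F})\,\omega_{\tilde\tau}^{-1}$; as $\chi$ varies, $\nu_\chi$ sweeps out a coset of $H:=\{\eta\circ N_{E/F}:\eta\in\widehat{F^\times}[2]\}$ with fibres $\{\chi,\chi\,\omega_{E/F}\}$ of size two. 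Invoking the local dichotomy that a conjugate-self-dual square-integrable $\tilde\tau_\chi$ is distinguished by exactly one of $(\GL_2(F),1)$ and $(\GL_2(F),\omega_{E/F})$, one member of each such pair contributes $1$; since the base-change condition holds on a full coset of the twisting stabiliser $X(\tilde\tau)=\{\eta:\tilde\tau\otimes(\eta\circ\det)\cong\tilde\tau\}$ (whose order is $|\Pi_{\phi_\tau}|$), everything collapses to the clean formula $S(\tau)=\bigl|X(\tilde\tau)\cap H\bigr|$.

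Finally I would run the case analysis, reading individual multiplicities off Theorem \ref{localmain}. When $|\Pi_{\phi_\tau}|=1$ we have $X(\tilde\tau)=\{1\}$, so $S(\tau)=1$, giving (i)(a). For $|\Pi_{\phi_\tau}|\geq2$ the key arithmetic lemma is that $\SL_1(D)$-distinction forces a nontrivial element of $H$ into $X(\tilde\tau)$ (concretely, a distinguished dihedral $\tilde\tau$ is automorphically induced from a quadratic extension of $E$ arising by base change from $F$), so $|X(\tilde\tau)\cap H|\geq2$. For $p\neq2$ one has $|H|=\tfrac12|F^\times/F^{\times2}|=2$, forcing $|X(\tilde\tau)\cap H|=2$; combined with Theorem \ref{localmain} this yields (i)(b) (a single distinguished member of multiplicity $2$, since a second would force $S\geq4$) and (i)(c) (two members, each of multiplicity $1$). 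The remaining and hardest case is (i)(d): for $p=2$ the group $H$ has order $\tfrac12|F^\times/F^{\times2}|\geq4$, so $X(\tilde\tau)\cap H$ may exhaust the order-$4$ stabiliser, and whether it has order $2$ or $4$ is governed not by the coarse invariants but by the finer $2$-adic arithmetic of the quadratic characters cutting out $X(\tilde\tau)$; I expect this to be the main obstacle, and it accounts exactly for the ambiguity $S(\tau)\in\{2,4\}$. Cases (ii) and (iii) fall outside Theorem C: for an irreducible principal series the $L$-packet is a singleton, so $S(\tau)=\dim\Hom_{\SL_1(D)}(\tau,\mathbb{C})=2$ directly from Theorem \ref{localmain}(ii), while for the reducible tempered case $\tau\subset I(\omega_{K/E})$ I would compute $\dim\Hom_{\SL_1(D)}(I(\omega_{K/E}),\mathbb{C})$ by Mackey theory for the action of the anisotropic group $\SL_1(D)$ on $\mathbb{P}^1(E)$, showing that both members of the size-two packet are distinguished with multiplicity one and hence $S(\tau)=2$.
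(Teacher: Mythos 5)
Your strategy is genuinely different from the paper's. The paper never leaves the theta-correspondence framework: Theorem \ref{thmforsum} is read off from the case-by-case proof of Theorem \ref{localmain}, in which see-saw dual pairs convert $\SL_1(D)$-invariant functionals into branching problems for the torus $K^1$, and the decisive vanishing statements come from the discriminant of the skew-Hermitian space $W_D=\mathfrak{V}_E\otimes_E D$. You instead lift to $\GL_2(E)$, write $S(\tau)=\dim\Hom_{\SL_1(D)}(\tilde\tau,\mathbb{C})$, decompose under $D^\times/\SL_1(D)\cong F^\times$, and then invoke Prasad--Hakim (Theorem C of the introduction) together with the Flicker--Kable dichotomy; this is essentially the Anandavardhanan--Prasad method for $\SL_2(F)$-distinction transported to $\SL_1(D)$. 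The bookkeeping is correct: the $F^\times$-action on the finite-dimensional Hom space is semisimple because, after untwisting the central character, it factors through the finite group $F^\times/(F^\times)^2$; the fibration $\chi\mapsto\nu_\chi$ with fibres $\{\chi,\chi\omega_{E/F}\}$ matches the dichotomy pairs; and the coset count gives $S(\tau)=\bigl|X(\tilde\tau)\cap H\bigr|$ as soon as $S(\tau)>0$. Parts (ii) and (iii), via Mackey theory for the two $\SL_1(D)$-orbits on $B(E)\backslash\SL_2(E)$, are fine and agree with the paper's own sequence \eqref{doublecoset}; note that for (iii) only $S(\tau)=2$ is claimed, which your computation of $\dim\Hom_{\SL_1(D)}(I(\omega_{K/E}),\mathbb{C})$ does give, since $I(\omega_{K/E})=\tau^+\oplus\tau^-$.

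The gap is in the square-integrable cases (i)(b)--(d), which rest entirely on what you yourself call the ``key arithmetic lemma'' --- that $\SL_1(D)$-distinction forces a nontrivial element of $H$ into $X(\tilde\tau)$ --- and which you assert rather than prove; it is exactly where the content of the theorem lies. Here is why it is not automatic. Conjugate-self-duality of some twist gives $X(\tilde\tau)^\sigma=X(\tilde\tau)$; since $X(\tilde\tau)\setminus\{\mathbf{1}\}$ has odd cardinality, some self-twist $\omega_{K/E}$ is $\sigma$-invariant, so $K/F$ is Galois. But $\sigma$-invariance alone does \emph{not} place $\omega_{K/E}$ in $H$: by Hilbert 90 the $\sigma$-invariant quadratic characters of $E^\times$ are exactly the $\eta\circ N_{E/F}$ with $\eta^2\in\{\mathbf{1},\omega_{E/F}\}$, and those with $\eta^2=\omega_{E/F}$ --- i.e.\ those for which $K/F$ is cyclic of degree $4$ --- lie outside $H$. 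So you must exclude the cyclic case, and this needs an argument: if $\Gal(K/F)=\langle g\rangle$ with $g^2=s$ the generator of $\Gal(K/E)$ and $\tilde\tau=\mathrm{Ind}_{W_K}^{W_E}\mu$, then conjugate-self-duality of a twist means $\mathrm{Ind}_{W_K}^{W_E}(\mu'\circ g)\cong\mathrm{Ind}_{W_K}^{W_E}(\mu'^{-1})$ with $\mu'=\mu\cdot(\tilde\chi^{-1}\circ N_{K/E})$, i.e.\ $\mu'\circ g\in\{\mu'^{-1},\,\mu'^{-1}\circ s\}$; either option forces $\mu'\circ s=\mu'$, contradicting irreducibility. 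Hence a representation dihedral with respect to a cyclic quartic $K/F$ has $S(\tau)=0$, and distinction indeed forces the biquadratic case $K=E[\sqrt{u}]$, $u\in F^\times$, i.e.\ $\omega_{K/E}\in H$. (The paper obtains the same exclusion from the theta machinery: when $K$ is not of the form $E[\sqrt{u}]$, the discriminant of $W_D$ is a nontrivial $L$, $\Theta_\psi(\mathbf{1})=\mathbf{1}\boxtimes\omega_{L/F}$, and the see-saw gives $\Hom_{\SL_1(D)}(\tau^\pm,\mathbb{C})\cong\Hom_{\SO(\mathfrak{V}_E)}(\mathbf{1},\mu_1)=0$.) With this lemma supplied, your derivation of (i)(a)--(d), using Theorem \ref{localmain} to distribute the total among the members of the packet, goes through.
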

\par
We will use the local theta correspondence for the quaternionic groups to prove Theorem \ref{localmain}.
The basic ideas come from \cite{Lpacific,hengfei2017}. With the help of the explicit theta correspondences between  small groups, one can use the see-saw identities to transfer the disctinction problems for $\SL_1(D)$ to another side, which is related to the branching problems for the non-split torus and so it becomes easier, see $\S3$ for more details. 
\begin{rem}
	Anandavardhanan and Prasad \cite{prasadcomp} discuss the global period
	problems for $\SL_1(D)$ over a quadratic number field extension $\mathbb{E}/\mathbb{F}$. More precisely, \cite[Proposition 9.3]{prasadcomp} implies that there exists  
	an automorphic representation $\pi$ of $\SL_1(D)(\mathbb{A}_\mathbb{E})$ which is locally distinguished by $\SL_1(D)(\mathbb{A}_\mathbb{F})$, but not globally distinguished in terms of having nonzero period integral on this subgroup.
\end{rem}

Now we briefly describe the contents and the organization of this paper. In $\S2$, we
 set up the notation about the local theta lifts. In $\S3$, the proof of Theorem  \ref{localmain} will be given and then  Theorem \ref{thmforsum} follows immediately. Finally, we will give a table for the multiplicities in one $L$-packet $\Pi_{\phi_\tau}$ when $\tau$ is $\SL_1(D)$-distinguished.
\subsection*{Acknowledgments} The author thanks  Wee Teck Gan for his guidance and numerous discussions when he was doing his Ph.D. study at National University of Singapore. He would like to thank Dipendra Prasad for useful discussions as well, especially for his comments on earlier versions of this paper.
\section{The Local Theta Correspondences }
In this section, we will briefly recall some results about the local theta correspondence, following \cite{moeglin1987correspondances}.

Let $F$ be a local field of characteristic zero.
Consider the dual pair $\Oo(m)\times \Sp(2n)$.
For simplicity, we may assume that $m$ is even. Fix a nontrivial additive character $\psi$ of $F$.
Let $\omega_\psi$ be the Weil representation for $\Oo(m)\times \Sp(2n)$. 
If $\pi$ is an irreducible representation of $\Oo(m)$ (resp. $\Sp(2n)$), the maximal $\pi$-isotypic quotient of $\omega_\psi$ has the form 
\[\pi\boxtimes\Theta_\psi(\pi) \]
for some smooth representation $\Theta_\psi(\pi)$ of $\Sp(2n)$ (resp. $\Oo(m)$). We call $\Theta_\psi(\pi )$
the big theta lift of $\pi.$ It is known that $\Theta_\psi(\pi)$ is of finite length and hence is admissible. Let $\theta_\psi(\pi)$ be the maximal semisimple quotient of $\Theta_\psi(\pi),$ which is called the small theta lift of $\pi.$ Then it was  conjectured by Roger Howe  that
\begin{itemize}
	\item $\theta_\psi(\pi)$ is irreducible whenever $\Theta_\psi(\pi)$ is non-zero.
	\item the map $\pi\mapsto \theta_\psi(\pi)$ is injective on its domain.
\end{itemize}
This has been proved by Waldspurger \cite{waldspurger1990demonstration} when the residual characteristic $p$ of $F$ is is not $2.$ Gan and Takeda \cite{gan2014howe,gan2014proof} have proved it completely.
\begin{thm}[Howe duality conjecture]
The	 Howe conjecture holds.
\end{thm}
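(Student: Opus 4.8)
The plan is to establish the two assertions of the conjecture—irreducibility of $\theta_\psi(\pi)$ and injectivity of $\pi\mapsto\theta_\psi(\pi)$—together, by reducing both to structural properties of the big theta lift. First I would reformulate the conjecture as the combination of (a) the big theta lift $\Theta_\psi(\pi)$ has a \emph{unique} irreducible quotient whenever it is nonzero, so that $\theta_\psi(\pi)$ is irreducible, and (b) a \emph{multiplicity one} bound
\[\dim\Hom_{\Oo(m)\times\Sp(2n)}(\omega_\psi,\pi\boxtimes\sigma)\leq 1\]
for all irreducible $\pi$ of $\Oo(m)$ and $\sigma$ of $\Sp(2n)$ (and symmetrically). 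Statement (a) identifies $\theta_\psi(\pi)$ with the cosocle of $\Theta_\psi(\pi)$, while (a) and (b) together forbid two distinct $\pi,\pi'$ from sharing an irreducible quotient, which yields injectivity.

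The main engine for (a) and (b) is Kudla's filtration of the Weil representation. Restricting $\omega_\psi$ to a maximal parabolic $P=MN\subset\Sp(2n)$ (or to its orthogonal counterpart) produces a finite, $M\times\Oo(m)$-stable filtration whose successive subquotients are representations parabolically induced from Weil representations of smaller dual pairs, tensored with representations of a general linear factor. Running an irreducible $\pi$ through this filtration expresses the Jacquet modules of $\Theta_\psi(\pi)$ in terms of theta lifts to groups of strictly smaller rank. This sets up an induction on $m+n$: granting the conjecture for all smaller dual pairs, the filtration pins down every non-supercuspidal constituent of $\Theta_\psi(\pi)$ and controls its cosocle, so the entire problem collapses to the case in which $\pi$ is supercuspidal.

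For the supercuspidal base case I would argue that $\Theta_\psi(\pi)$ is irreducible (or zero) at the first occurrence of $\pi$ in the relevant Witt tower, using the compatibility of theta with the Jacquet functor to kill off the induced pieces, and then invoke the conservation relation of Kudla--Rallis to bound the behaviour in the two towers and beyond first occurrence. Irreducibility at first occurrence gives the multiplicity bound (b) in the base case, and injectivity follows because the first occurrence index is an invariant of $\pi$ and the lift at that index recovers $\pi$ by the reverse correspondence. This closes the induction and proves the conjecture away from the subtle arithmetic of small residue characteristic.

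The hard part, and the reason the conjecture long resisted a uniform proof, will be the residue characteristic $2$ case, where the structure theory of supercuspidals and the usual Jacquet-module vanishing arguments are far more delicate. Here I would follow Gan and Takeda and replace the representation-theoretic treatment of the supercuspidal base by a direct analysis of the \emph{doubled} Weil representation through the doubling zeta integral, using the see-saw pair together with the reducibility theory of degenerate principal series to extract the bound in (b) in a way that is insensitive to $p$. I expect this step—the supercuspidal, $p=2$ base case—to be by far the main obstacle: Kudla's filtration and the inductive bookkeeping are essentially formal, but terminating the induction requires genuinely new input that does not depend on the residual characteristic being odd.
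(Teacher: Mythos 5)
The first thing to note is that the paper does not prove this statement at all: the theorem is quoted as a known result, with the proof attributed to Waldspurger \cite{waldspurger1990demonstration} for residual characteristic $p\neq 2$ and to Gan--Takeda \cite{gan2014howe,gan2014proof} in general (the paper separately invokes Gan--Sun \cite{gan2015howe} for the quaternionic dual pairs it actually uses later). So the real question is whether your sketch would stand as an independent proof, and it would not. Your preliminary reformulation is fine --- in fact assertion (a) alone, applied in both directions of the duality, already yields injectivity, since $\theta_\psi(\pi_1)\cong\theta_\psi(\pi_2)=\sigma\neq 0$ makes both $\pi_1$ and $\pi_2$ irreducible quotients of $\Theta_\psi(\sigma)$.

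The genuine gap is the claim that Kudla's filtration makes the induction ``essentially formal'' and collapses the conjecture to a supercuspidal base case. That is precisely the step that fails, and it is where the twenty-five-year difficulty of the conjecture lived: Howe duality for supercuspidal $\pi$ was settled by Kudla and by M\oe glin--Vign\'eras--Waldspurger in the 1980s, in \emph{all} residue characteristics, by exactly the first-occurrence and Jacquet-functor arguments you outline --- and yet the general conjecture remained open until 2014. The reason the reduction is not formal is that Kudla's filtration controls the Jacquet modules of $\Theta_\psi(\pi)$, and Jacquet modules do not determine the cosocle: two inequivalent irreducible quotients of $\Theta_\psi(\pi)$ can have their Jacquet data spread across different layers of the filtration, and the inductive hypothesis for smaller dual pairs gives no mechanism to compare them. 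Neither actual proof follows your route. Waldspurger's $p\neq 2$ argument works with lattice models and $K$-types, not an induction to supercuspidals; Gan--Takeda make the doubling see-saw together with the Kudla--Rallis/Yamana structure theory of degenerate principal series the central engine, applied uniformly in $p$ to tempered representations, with Kudla's filtration handling boundary orbits and the Langlands classification reducing the general case to the tempered one. Consequently your final paragraph also misplaces the difficulty: the doubling argument is not a patch for a ``$p=2$ supercuspidal base case'' (that case was never the obstruction); it is the replacement for the formal reduction that does not exist.
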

 Gan and Sun \cite{gan2015howe}  prove the Howe duality conjecture for the quaternionic unitary groups.
More precisely,
 Let $D$ be the $4$-dimensional quaternion division algebra over $F$. Let $V_D$ be an $n$-dimensional Hermitian $D$-vector space with quaternionic Hermitian $F$-group $U(V_D)$. Let $W_D$ be an $m$-dimensional skew-Hermitian $D$-vector space with quaternionic Hermitian $F$-group $U(W_D)$, then there is an embedding of $F$-groups
\[U(V_D)\times U(W_D)\longrightarrow\Sp(4mn). \] 
One may define the Weil representation $\omega_\psi$ on $U(V_D)\times U(W_D)$ similarly. Given an irreducible representation $\pi$ of $U(V_D)$ (resp. $U(W_D)$), the maximal $\pi$-isotypic quotient of 
$\omega_\psi$ has the form $\pi\boxtimes\Theta_\psi(\pi)$ for some smooth representation $\Theta_\psi(\pi)$ of $U(W_D)$ (resp. $U(V_D)$), where $\Theta_\psi(\pi)$ is called the big theta lift and it has an irreducible quotient $\theta_{\psi }(\pi)$. And the map $\pi\mapsto\theta_\psi(\pi)$ is injective on its domain.

 \subsection{First occurence indices for pairs of orthogonal Witt towers} Let $W_n$ be the $2n$-dimensional symplectic vector space with associated symplectic group $\Sp(W_n)$ and consider the two towers of orthogonal groups attached to the quadratic spaces with nontrivial discriminant. More precisely, let
 $V_E$ (resp. $\epsilon V_E$) be the $2$-dimensional quadratic space with discriminant $E$ and Hasse invariant $+1$ (resp. $-1$), $\mathbb{H}$ be the $2$-dimensional hyperbolic quadratic space over $F$,
\[V_r^+=V_E\oplus \mathbb{H}^{r-1}\quad\mbox{and}\quad V_r^-=\epsilon V_E\oplus\mathbb{H}^{r-1} \]
and denote the orthogonal groups by $\Oo(V_r^+)$ and $\Oo(V_r^-)$ respectively. For an irreducible representation $\pi$ of $\Sp(W_n),$ one may consider the theta lifts $\theta^+_r(\pi)$ and $\theta^-_r(\pi)$ to
$\Oo(V^+_r)$ and $\Oo(V_r^-)$ respectively, with respect to a fixed non-trivial additive character $\psi.$ Set
\[\begin{cases}
r^+(\pi)=\inf\{2r:\theta^+_r(\pi)\neq0 \};\\
r^-(\pi)=\inf\{2r:\theta^-_r(\pi)\neq0 \}.
\end{cases} \]
Then Kudla and Rallis \cite{kudla2005first}, B. Sun and C. Zhu \cite{sun2012conservation} showed:
\begin{thm}
	[Conservation Relation] For any irreducible representation $\pi$ of $\Sp(W_n),$ we have
	\[r^+(\pi)+r^-(\pi)=4n+4=4+2\dim W_n. \]
\end{thm}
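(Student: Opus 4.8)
The plan is to establish the equality as a pair of opposite inequalities,
\[
r^+(\pi)+r^-(\pi)\le 4n+4 \qquad\text{and}\qquad r^+(\pi)+r^-(\pi)\ge 4n+4,
\]
after first checking that the two quantities $r^\pm(\pi)$ are genuine thresholds. For the threshold property I would invoke the tower principle: writing the Weil representation $\omega_\psi$ in a mixed (Schrödinger) model and computing its Jacquet module along a maximal parabolic of $\Oo(V_r^\pm)$, one obtains Kudla's three-step filtration relating the $\Oo(V_r^\pm)$-lift to the $\Oo(V_{r-1}^\pm)$-lift. This yields the persistence statement $\theta_r^\pm(\pi)\neq 0\Rightarrow\theta_{r+1}^\pm(\pi)\neq 0$, so that the infima $r^\pm(\pi)$ are well defined and the lift is nonzero for all larger ranks. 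At this point each tower also admits a crude individual bound from nonvanishing in the stable range: since $V_r^\pm$ has Witt index $r-1$, the lift from $\Sp(W_n)$ is automatically nonzero once $r-1\ge 2n$, giving $r^\pm(\pi)\le 4n+2$.

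The engine for coupling the two towers is the doubling method of Piatetski-Shapiro–Rallis, which I would organize as a see-saw applied to $\pi\boxtimes\pi^\vee$ on $\Sp(W_n)\times\Sp(W_n)$ inside $\Sp(W_n\oplus(-W_n))$. Rallis's theory identifies the first occurrence indices $r^\pm(\pi)$ with the locations at which $\pi$ contributes to the degenerate principal series $I(s)=\mathrm{Ind}_P^{\Sp(4n)}\chi_E|\cdot|^s$ attached to the Siegel parabolic $P$, where the quadratic character $\chi_E$ records the discriminant $E$ of the towers. The two Witt towers, distinguished only by their Hasse invariant $\pm1$, correspond to the two reducibility points of $I(s)$ that are interchanged by the standard intertwining operator under the functional equation $s\leftrightarrow -s$. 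The additive relation $r^+(\pi)+r^-(\pi)=4n+4$ is then exactly the statement that these two complementary reducibility points are symmetric about the center of the functional equation, the constant $4n+4=2\dim W_n+4$ being read off from the induction data of $I(s)$ on $\Sp(4n)$.

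Translating the symmetry into the two inequalities is where the real work lies. The upper bound is the \emph{complementary nonvanishing} statement: if one tower first occurs at rank $r_0$, the other must occur by the symmetric rank, which I would prove by producing a nonzero invariant functional via the see-saw together with the stable-range nonvanishing above. The lower bound is the \emph{vanishing-below-threshold} statement (both towers cannot occur too early), and this is the main obstacle — it is precisely the hard direction completed by Sun–Zhu. Proving a nonvanishing is soft, but proving that the lift is \emph{zero} below the conjectured threshold requires the fine representation theory of the degenerate principal series $I(s)$: one must control its socle and maximal semisimple quotient at the relevant reducibility point and show that the normalized intertwining operator realizing the $s\leftrightarrow -s$ symmetry is holomorphic and nonzero there, using the MVW involution to pass between $\pi$ and $\pi^\vee$. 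I expect this degenerate-principal-series analysis, rather than the theta-theoretic bookkeeping, to be the crux of the argument.
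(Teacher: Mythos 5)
You should first be aware that the paper does not prove this theorem at all: it is quoted as a known result, with the two halves attributed to Kudla--Rallis \cite{kudla2005first} and Sun--Zhu \cite{sun2012conservation}, and then used as a black box later in the paper. So your proposal has to be judged as a standalone proof, and as such it has a genuine gap: neither inequality is actually established. The preliminary steps (Kudla's persistence principle via the Jacquet-module filtration, which makes $r^\pm(\pi)$ genuine thresholds, and the stable-range bound $r^\pm(\pi)\le 4n+2$) are correct but are the easy part. For the upper bound you say a nonzero invariant functional is ``produced via the see-saw together with the stable-range nonvanishing''; the stable-range bound alone only gives $r^+(\pi)+r^-(\pi)\le 8n+4$, and closing the distance to $4n+4$ is exactly the content of the Kudla--Rallis doubling argument, which requires the structure theory of the quotients $R(V^\pm)$ of the degenerate principal series $I(s)$ --- none of which you supply. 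For the lower bound you concede the point outright: you name it as ``the hard direction completed by Sun--Zhu'' and only ``expect'' where the crux lies. Deferring the essential inequality to the very reference the theorem is cited from is not a proof; it reproduces the citation the paper already makes.

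There is also a concrete inaccuracy in the mechanism you describe. With the standard normalization, the Rallis quotient $R(V)$ attached to a quadratic space $V$ of even dimension $m$ maps to the Siegel degenerate principal series $I(s,\chi_E)$ of the doubled group $\Sp_{4n}$ at the point $s_m=(m-2n-1)/2$, so the functional-equation symmetry $s\leftrightarrow -s$ pairs dimensions with $m+m'=4n+2$, not $4n+4$; the conservation points are symmetric about $s=\tfrac{1}{2}$, not about the center. This off-by-two shift is not cosmetic: it is precisely the reason the conservation relation does not follow from the doubling functional equation by bookkeeping, and why the vanishing direction resisted proof until Sun--Zhu. Your sketch, read literally, would predict the wrong constant.
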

There is an analogous problem where one fixes an irreducible representation
of $\Oo(V_r^+)$ or $\Oo(V_r^-)$ and consider its theta lifts $\theta_n(\pi)$ to the tower of symplectic group $\Sp(W_n).$ Then with $n(\pi)$ defined in the analogous fashion, thanks to \cite[Theorem 1.10]{sun2012conservation}, we have
\[n(\pi)+n(\pi\otimes\det)=\dim V_r^\pm. \]
 \subsection{See-saw identities}
 Let $V_D$ be a Hermitian $D$-vector space with Hermitian form $h$ given by $$h(v_1,v_2)=f(v_1,v_2)+B(v_1,v_2)j,\forall v_1,v_2\in V_D.$$ Let $\mathfrak{W}_E=Res_{D/E}V_D$ be the  same space $V_D$ but now thought of as a $E$-vector space with a symplectic form $B$.
If $\mathfrak{V}_E$ is a quadratic $E$-vector space, then one can form a skew-Hermitian $D$-vector space $\mathfrak{V}_E\otimes_ED=W_D$ (which is not canonical, see \S2.3).
  Then we have the following isomorphism of symplectic spaces:
\[Res_{D/F }[W_D\otimes_D V_D ]\cong Res_{E/F} (\mathfrak{V}_E\otimes_E\mathfrak{W}_E )=\mathbf{W} \]
There is a pair 
\[(\Oo(\mathfrak{V}_E),\Sp(\mathfrak{W}_E) )\mbox{  and  }(U(W_D),U(V_D)) \]
of dual reductive pairs in the symplectic group $\Sp(\mathbf{W}).$
A pair $(G,H)$ and $(G',H')$ of dual reductive pairs in a symplectic group is called a see-saw pair if $H\subset G'$ and $H'\subset G.$ Let us fix the natural splittings \cite{kudla1994splitting}
\[ i_1:\Oo(\mathfrak{V}_E)\times \Sp(\mathfrak{W}_E)\longrightarrow\Mp(\mathbf{W}) \]
and $i_2:U(W_D)\times U(V_D)\longrightarrow \Mp(\mathbf{W})$.
\begin{lem}[See-saw identity]
	For a see-saw pair of dual reductive pairs $(\Sp(\mathfrak{W}_E),\Oo(\mathfrak{V}_E))$ and $(U(W_D),U(V_D))$, let $\pi$ be an irreducible representation of $\Oo(\mathfrak{V}_E)$ and $\pi'$ of $U(V_D),$ if the splittings $i_1$ and $i_2$ satisfy  \begin{equation}\label{seesawsplitting}
	i_1|_{\Oo(\mathfrak{V}_E)\times U(V_D)}=i_2|_{\Oo(\mathfrak{V}_E)\times U(V_D)}, \end{equation} 
	then we have the following isomorphism
	\[\Hom_{\Oo(\mathfrak{V}_E)}(\Theta_\psi(\pi'),\pi )\cong \Hom_{U(V_D)}(\Theta_\psi(\pi),\pi' ). \]
	\end{lem}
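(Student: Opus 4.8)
The plan is to introduce the single bi-equivariant Hom space
\[ M = \Hom_{\Oo(\mathfrak{V}_E)\times U(V_D)}(\omega_\psi,\,\pi\boxtimes\pi') \]
and to evaluate it in two different ways, contracting one tensor factor at a time. The starting observation is that the hypothesis \eqref{seesawsplitting} is exactly what is needed to make this expression meaningful: since $\Oo(\mathfrak{V}_E)\subset U(W_D)$ and $U(V_D)\subset\Sp(\mathfrak{W}_E)$, the subgroup $\Oo(\mathfrak{V}_E)\times U(V_D)$ sits inside both dual pairs, and condition \eqref{seesawsplitting} guarantees that the Weil representation it inherits through $i_1$ and through $i_2$ coincide. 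Thus $\Theta_\psi(\pi)$ and $\Theta_\psi(\pi')$ are built from one and the same representation $\omega_\psi$ of $\Mp(\mathbf{W})$, and $M$ is well defined; I would record this compatibility first.

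First I would contract the $U(V_D)$-factor. By definition $\Theta_\psi(\pi')$ is the big theta lift of $\pi'$ for the dual pair $(U(W_D),U(V_D))$ realized through $i_2$; that is, $\pi'\boxtimes\Theta_\psi(\pi')$ is the maximal $\pi'$-isotypic quotient of $\omega_\psi$ as a $U(V_D)\times U(W_D)$-module. Every $U(V_D)$-map $\omega_\psi\to\pi'$ therefore factors through this quotient, and since $\pi'$ is irreducible, Schur's lemma together with the tensor–Hom adjunction identifies $\Hom_{U(V_D)}(\omega_\psi,\pi')$, as a module over $U(W_D)$ and hence over the subgroup $\Oo(\mathfrak{V}_E)$, with $\Theta_\psi(\pi')$. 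Feeding this back into $M$ gives $M\cong\Hom_{\Oo(\mathfrak{V}_E)}(\Theta_\psi(\pi'),\pi)$. The symmetric computation for the dual pair $(\Sp(\mathfrak{W}_E),\Oo(\mathfrak{V}_E))$ through $i_1$—contracting the $\Oo(\mathfrak{V}_E)$-factor first—identifies $\Hom_{\Oo(\mathfrak{V}_E)}(\omega_\psi,\pi)$ with $\Theta_\psi(\pi)$ as a $U(V_D)$-module and yields $M\cong\Hom_{U(V_D)}(\Theta_\psi(\pi),\pi')$. Equating the two evaluations of $M$ gives the asserted isomorphism.

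The step that requires genuine care, and which I expect to be the main obstacle, is the passage from the maximal-isotypic-quotient definition of the big theta lift to its realization as a Hom space: a priori the adjunction produces smooth contragredients and full linear duals rather than $\Theta_\psi(\pi)$ and $\Theta_\psi(\pi')$ themselves, so one must verify that these dualizations are harmless and cancel against one another. This is where the Howe duality theorem for the quaternionic unitary groups proved by Gan and Sun enters: it guarantees that $\Theta_\psi(\pi)$ and $\Theta_\psi(\pi')$ are admissible of finite length, so that double-dual and smoothness issues do not obstruct the identifications. I would also check that the natural $U(W_D)$- and $\Sp(\mathfrak{W}_E)$-actions on the two contracted Hom spaces restrict correctly to $\Oo(\mathfrak{V}_E)$ and $U(V_D)$ respectively, which is again a direct consequence of the splitting compatibility \eqref{seesawsplitting}. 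Once this bookkeeping is in place, the two computations of $M$ are purely formal and the see-saw identity follows.
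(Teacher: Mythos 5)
Your argument is correct in substance, but it is not the route the paper takes: the paper disposes of this lemma with a single citation to \cite[Theorem 8.2]{gurevich2015non}, and spends its actual effort on the complementary issue of what to do when the hypothesis \eqref{seesawsplitting} \emph{fails} (the subsequent discussion relating $i_1$ and $i_2$ by the character $\det$ of $\Oo(\mathfrak{V}_E)$). What you have written out is the classical see-saw argument of Kudla: evaluate the single space $M=\Hom_{\Oo(\mathfrak{V}_E)\times U(V_D)}(\omega_\psi,\pi\boxtimes\pi')$ in two ways, and your observation that \eqref{seesawsplitting} is exactly what makes $M$ well defined is precisely the right reading of the hypothesis. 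One inaccuracy in your execution: tensor--Hom adjunction identifies $\Hom_{U(V_D)}(\omega_\psi,\pi')$ not with $\Theta_\psi(\pi')$ but with its full linear dual, so that identification is false as stated. You flag this yourself, but the repair you propose (admissibility of the big theta lifts via Gan--Sun) is heavier than necessary; the clean fix is the factorization you also invoke: every element of $M$ factors through the maximal $\pi'$-isotypic quotient $\pi'\boxtimes\Theta_\psi(\pi')$, whose defining kernel is stable under the commuting $U(W_D)$-action and hence under $\Oo(\mathfrak{V}_E)$, and then
\[
M\cong\Hom_{\Oo(\mathfrak{V}_E)\times U(V_D)}\bigl(\pi'\boxtimes\Theta_\psi(\pi'),\,\pi\boxtimes\pi'\bigr)\cong\Hom_{\Oo(\mathfrak{V}_E)}\bigl(\Theta_\psi(\pi'),\pi\bigr)
\]
follows from Schur's lemma alone, since any map from the finitely generated $\pi'$ into $\pi\otimes\pi'$ has image in $V_0\otimes\pi'$ for some finite-dimensional subspace $V_0\subset\pi$; no duals appear and no Howe duality is needed. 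As for what each approach buys: yours is self-contained and makes visible that the lemma is purely formal, resting only on \eqref{seesawsplitting} and Schur's lemma; the paper's citation is economical and silently absorbs the genuinely delicate bookkeeping---that all of these Hom spaces are really between genuine representations of preimages in $\Mp(\mathbf{W})$---which your sketch, like most informal accounts, suppresses.
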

It follows from \cite[Theorem 8.2]{gurevich2015non}. However, \eqref{seesawsplitting} may not hold, see \cite[Lemma 4.3.7]{hengfei2017}. For our purpose, suppose $\dim_D V_D=1$ and $\dim_E \mathfrak{V}_E=2$,
 then
$U(V_D)\cong\SL_1(D)$. Let $\widetilde{\Oo(\mathfrak{V}_E)}$ denote the preimage of $\Oo(\mathfrak{V}_E)$ in $\Mp(\mathbf{W})$. Let $\widetilde{\pi}$ be a geniune representation of $\widetilde{\Oo(\mathfrak{V}_E)}$ associated to $\pi$, i.e.
\[\widetilde{\pi}(h,\epsilon)=\epsilon\cdot\pi(h)\mbox{  for  }(h,\epsilon)\in\widetilde{\Oo(\mathfrak{V}_E)}. \]
Observe that $i_1(h)=(h,1)\in\widetilde{\Oo(\mathfrak{V}_E)}$ and $i_2(h)=(h,\det(h))\in\widetilde{\Oo(\mathfrak{V}_E)}$
for $h\in\Oo(\mathfrak{V}_E)$. This means that $(i_1^{-1} i_2)|_{\Oo(\mathfrak{V}_E)}$ corresponds to the quadratic character $\det$ of $\Oo(\mathfrak{V}_E)$. Hence $$\Hom_{i_1(\Oo(\mathfrak{V}_E))}(\omega_\psi,\widetilde{\pi})\cong\Hom_{i_2(\Oo(\mathfrak{V}_E))}(\omega_\psi,\widetilde{\pi\otimes\det }).  $$
It will be useful in the proof of Theorem \ref{localmain}, see Lemma \ref{siegelweil}.
\subsection{Vector spaces}
Let $K/E$ be a quadratic field extension. Consider $K$ as a $2$-dimensional quadratic space $\mathfrak{V}_E$ over $E$ with the norm map $N_{K/E}.$ 
Given a $2$-dimensional quadratic $E$-vector space $\mathfrak{V}_E$ with a non-trivial discriminant $e\in E^\times\setminus{E^\times}^2$ and a  symmetic bilinear form $B$, set $$B(v_1,v_2j)=B(v_1,v_2)j\quad \mbox{and}\quad B(v_1d_1,v_2d_2)=B(v_2d_2,v_1d_1)$$
for $v_1,v_2\in \mathfrak{V}_E$ and $d_1,d_2\in D$. One may construct a skew-Hermitian form $h$ on $\mathfrak{V}_E\otimes_ED$
\[h(w_1,w_2)=B(w_1,w_2j)+B(w_1,w_2)j \]
where $w_1,w_2\in \mathfrak{V}_E\otimes_E D=W_D$. Then $W_D$ becomes to be a $2$-dimensional skew-Hermitian $D$-vector space with discriminant $N_{E/F}(e)\in F^\times/{F^\times}^2$, denoted by $W_D$. If $N_{E/F}(e)=1$, then $U(W_D)=U_{1,1}(D)$ and $K/E$ is an unramified field extension. If $N_{E/F}(e)$ is nontrivial, then the discriminant of $W_D$ corresponds to a quadratic field extension $L/F$. Moreover, there is a $4$-dimensional quaternion division algebra over $L$ such that 
\[U(W_D)=\GL_1(D_L)^\natural/F^\times \]
where $\GL_1(D_L)^\natural=\{x\in D_L^\times:N_{D_L/L}(x)\in F^\times \}$, see \cite[\S9]{prasad2011bessel}.
\begin{rem}
	If $\mathfrak{V}_E'=a\mathfrak{V}_E$ with a bilinear form $aB$, where $a\in E^\times\setminus N_{K/E}K^\times$, then the discriminant of the $D$-vector space $\mathfrak{V}_E'\otimes_ED$ does not change. So the skew-Hermitian $D$-vector space $W_D$ does not depend on the choice of the bilinear form $B$ on $\mathfrak{V}_E$. 
\end{rem}

\subsection{Degenerate principal series representations} Assume  $U(W_D)=U_{1,1}(D)$. There is a natural group embedding $\Oo(\mathfrak{V}_E)\hookrightarrow U_{1,1}(D)$. Let $P$ be a Siegel parabolic subgroup of $U_{1,1}(D).$ Assume that $\mathcal{I}(s)$ is the degenerate principal series of $U_{1,1}(D)$.
Let us consider the double coset decomposition $P\backslash U_{1,1}(D)/\Oo(\mathfrak{V}_E).$ 
\begin{itemize}
	\item If $K$ is a field, then there is only one orbit in $P\backslash U_{1,1}(D)/ \Oo(\mathfrak{V}_E)$.
	\item If $K=E\oplus E,$ then there is one open orbit and one closed orbit  in $P\backslash U_{1,1}(D)/ \Oo(\mathfrak{V}_E)$. 
\end{itemize}
Assume that there is a stratification $P\backslash U_{1,1}(D)/\Oo(\mathfrak{V}_E)=\sqcup_{i=0}^r X_i$ such that  $\sqcup_{i=0}^k X_i$ is open for each $k$ lying in $\{0,1,2,\cdots, r\}.$
Then
there is a $\Oo(\mathfrak{V}_E)$-equivariant filtration $\{I_i \}_{i=0,1,2,\cdots,r }$ of $\mathcal{I}(s)|_{\Oo(\mathfrak{V}_E)}$ such that 
$$0=I_{-1}\subset I_0\subset I_1\subset \cdots\subset I_r=\mathcal{I}(s)|_{\Oo(\mathfrak{V}_E)}$$
  and the smooth functions in the quotient $ I_{i}/I_{i-1}$ are supported on a single orbit $X_{i}$  in  $P\backslash U_{1,1}(D)/\Oo(\mathfrak{V}_E)$. 
\begin{defn} Given an irreducible representation $\pi$ of $\Oo(\mathfrak{V}_E)$, if $\Hom_{\Oo(\mathfrak{V}_E)}(I_{i+1}/I_i,\pi )\neq0$ implies that  $I_{i+1}/I_{i}$ is supported on the open orbits in $P\backslash U_{1,1}(D)/\Oo(\mathfrak{V}_E)$, then we say that the representation $\pi$ does not occur on the boundary of $\mathcal{I}(s).$
	\end{defn}
It is well-known that only the open orbits can support supercuspidal representations.

\section{Proof of  Theorem \ref{localmain}}
Before we prove Theorem \ref{localmain},
let us recall some facts. Let $V_D$ denote the rank one Hermitian space over $D$ with quaternionic Hermitian group $U(V_D)=\SL_1(D)$.
\begin{lem}
	If the discriminant of $W_D=\mathfrak{V}_E\otimes_ED$ is nontrivial in $F^\times/{F^\times }^2$, say $L$, then the theta lift of the trivial representation from $\SL_1(D)$ to $U(W_D)=\GL_1(D_L)^\natural/F^\times$ is a character, i.e.
	\[\Theta_\psi(\mathbf{1})=\mathbf{1}\boxtimes\omega_{L/F}, \]
	where $D_L$ is a quaternion division algebra over $L$ and $\GL_1(D_L)^\natural=\{g\in D_L^\times| N_{D_L/L}(g)\in F^\times \}$.
\end{lem}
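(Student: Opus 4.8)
The plan is to exploit the fact that, since $D$ is division and the discriminant $L$ of $W_D$ is nontrivial, both members of the dual pair are anisotropic: $U(V_D)=\SL_1(D)$ is compact, and the non-quasi-split rank-two form $U(W_D)=\GL_1(D_L)^\natural/F^\times$ is compact as well. Hence the trivial representation $\mathbf 1$ of $\SL_1(D)$ is one-dimensional, its big theta lift $\Theta_\psi(\mathbf 1)$ is a genuine finite-dimensional representation of the compact group $U(W_D)$, and by the Howe duality of Gan--Sun it is irreducible once it is nonzero. The whole content therefore splits into three points: nonvanishing, showing that $\Theta_\psi(\mathbf 1)$ is one-dimensional, and identifying the resulting character as $\omega_{L/F}$. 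Here one should read $\omega_{L/F}$ on $U(W_D)$ as $\omega_{L/F}\circ N_{D_L/L}$, which is well defined because $N_{D_L/L}$ lands in $F^\times$ on $\GL_1(D_L)^\natural$ and $N_{D_L/L}(F^\times)\subset (F^\times)^2\subset\ker\omega_{L/F}$.

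First I would place the Weil representation in a model adapted to the $E$-structure. Using the isomorphism $\mathbf W\cong Res_{E/F}(\mathfrak V_E\otimes_E\mathfrak W_E)$ and a Lagrangian line inside the two-dimensional symplectic $E$-space $\mathfrak W_E=Res_{D/E}V_D$, one realizes $\omega_\psi$ on $\mathcal S(\mathfrak V_E)=\mathcal S(K)$. In this Schr\"odinger model the subgroup $U(V_D)=\SL_1(D)\subset\Sp(\mathfrak W_E)=\SL_2(E)$ acts through the restriction of the classical $\SL_2(E)$ Weil-representation formulas attached to the two-dimensional quadratic space $(K,N_{K/E})$, while the subgroup $\Oo(\mathfrak V_E)=\Oo(K)\subset U(W_D)$ acts geometrically on $\mathcal S(K)$. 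The lift is the space of coinvariants $\Theta_\psi(\mathbf 1)=(\mathcal S(K))_{\SL_1(D)}$, which by compactness equals the invariants $(\mathcal S(K))^{\SL_1(D)}$, carrying the residual $U(W_D)$-action.

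Next I would compute these invariants by restricting the residual $U(W_D)$-action to $\Oo(K)$ and exploiting that this action is explicit. The see-saw identity, in the $\det$-twisted form forced by \eqref{seesawsplitting}, organizes the computation: detecting which character of $\Oo(K)$ occurs in $\Theta_\psi(\mathbf 1)|_{\Oo(K)}$ is recast in terms of the already explicit theta correspondence for $(\Oo(K),\SL_2(E))$, and this singles out a single character of $\Oo(K)$, so that $\Theta_\psi(\mathbf 1)$ is indeed one-dimensional. To name the resulting character I would evaluate it on the center of $U(W_D)$ and on the reduced-norm image, and match the result against $\omega_{L/F}\circ N_{D_L/L}$; the discriminant $L$ of $W_D$ enters precisely through the Weil-index and splitting data of the dual pair.

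The main obstacle is this last identification — producing the nontrivial quadratic character rather than merely its dimension. The delicate point is the metaplectic splitting: as recorded after \eqref{seesawsplitting}, the two natural splittings $i_1$ and $i_2$ differ by $\det$ on $\Oo(\mathfrak V_E)$, and more generally the cocycle and Weil-index contributions attached to $(K,N_{K/E})$ and to the discriminant of $W_D$ must be tracked carefully, since it is exactly these contributions that yield $\omega_{L/F}$ in place of the trivial character. By contrast, nonvanishing is comparatively routine: it follows because $\dim_D W_D=2$ is large enough for the trivial representation of the rank-one group $U(V_D)$ to occur, and can be seen directly from the nonvanishing of the $\SL_1(D)$-invariant functional obtained by averaging a suitable test vector over the compact group $\SL_1(D)$ in the model above.
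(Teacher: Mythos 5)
Your structural observations are sound and partially overlap with the paper's proof: $U(W_D)=\GL_1(D_L)^\natural/F^\times$ is indeed compact, nonvanishing does follow from the stable range, and the final passage from $\theta_\psi(\mathbf{1})$ to $\Theta_\psi(\mathbf{1})$ via compactness plus the Gan--Sun Howe duality is exactly how the paper closes its argument. But the two central steps of your plan have genuine gaps. First, the see-saw reduction you propose is circular. The see-saw identity turns $\Hom_{\Oo(\mathfrak{V}_E)}(\Theta_\psi(\mathbf{1}),\pi)$ into $\Hom_{\SL_1(D)}(\Theta_\psi(\pi),\mathbb{C})$, where $\Theta_\psi(\pi)$ is a dihedral (super)cuspidal representation of $\SL_2(E)$; calling the latter ``already explicit'' conflates knowing the correspondence $(\Oo(K),\SL_2(E))$ with knowing the $\SL_1(D)$-distinction of its image. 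That distinction multiplicity is precisely the unknown of the whole paper, and indeed the lemma you are trying to prove is the tool the paper later uses (in case (A) of the proof of Theorem \ref{localmain}) to compute it. You cannot use that see-saw in the reverse direction without an independent computation of those invariants, which you do not supply.

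Second, the identification of the character as $\omega_{L/F}$ --- as opposed to $\mathbf{1}$, $\omega_{E/F}$, or the third quadratic character of $F^\times/(F^\times)^2$ pulled back through the reduced norm --- is the actual content of the lemma, and your proposal explicitly defers it (``the main obstacle''), offering only the intention to track Weil indices and splitting cocycles. No such computation is carried out, and it is genuinely delicate; this is presumably why the paper avoids it entirely by going global. For contrast, the paper's proof globalizes the data following \cite[Proposition 5.1]{gan2014inner}, gets nonvanishing of the global theta lift of $\mathbf{1}$ from the stable range, quotes the already-established unramified computation \cite[Lemma 3.1]{Lpacific} at the places where the global quaternion algebra splits, and then invokes strong multiplicity one for $\GL_1(\mathbb{D}_{\mathbb{L}})$ to force the global lift to be $\mathbf{1}\boxtimes\omega_{\mathbb{L}/\mathbb{F}}$; local-global compatibility then pins down the local component at the distinguished place, with no local cocycle bookkeeping needed. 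If you want a purely local proof you must either carry out the metaplectic splitting computation in full or find another way to separate the four quadratic characters of $U(W_D)$; as written, your argument establishes at best that $\Theta_\psi(\mathbf{1})$ is some character, and even that relies on the circular step above.
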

\begin{proof}
It follows directly from the  theta correspondence over the compact groups. More precisely,  following \cite[Proposition 5.1]{gan2014inner},
let $\mathbb{L}/\mathbb{F}$  be a quadratic extension of number fields and $\mathbb{D}$ (resp. $\mathbb{D}_\mathbb{L}$) a quaternion
$\mathbb{F}$-algebra (resp. $\mathbb{L}$-algebra) with involution $\ast$ such that for some place $v_0$ of $\mathbb{F}$, we have
\[(\mathbb{L}/\mathbb{F})_{v_0}=L/F~\mbox{  and  }\mathbb{D}_{v_0}=D ~~(\mbox{resp. }(\mathbb{D}_\mathbb{L})_{v_0}=D_{L}). \]
Let $\mathbb{V}$ denote the rank one Hermitian space over $\mathbb{D}$ with hermitian form
\[\langle x,y\rangle=x\cdot y^\ast \]
and let $\mathbb{W}$ denote the non-split rank $2$ skew-Hermitian space over $\mathbb{D}$ of discriminant $\mathbb{L}$, such that
\[\mathbb{V}_{v_0}=V_D\mbox{  and  }\mathbb{W}_{v_0}=W_D. \]
	Then one has a dual pair $U(\mathbb{V})\times U(\mathbb{W})$ over $\mathbb{F}$ 
and one may consider the global theta lift from
\[ U(\mathbb{V})=\SL_1(\mathbb{D}) \]
to 
\[U(\mathbb{W})=\GL_1( \mathbb{D}_{\mathbb{L}})^\natural/\mathbb{F}^\times \]
where $\GL_1( \mathbb{D}_{\mathbb{L}})^\natural=\{g\in \mathbb{D}^\times_\mathbb{L}:N_{\mathbb{D}_\mathbb{L}/\mathbb{L}}(g)\in\mathbb{F}^\times \}$. The global theta lift to $U(\mathbb{W})$ of trivial representation of $\SL_1(\mathbb{D})$ is nonzero  since we are in the stable range.
Moreover, at the places where $\mathbb{D}$ is unramified, \cite[Lemma 3.1]{Lpacific} implies that the local theta lift of the trivial representation  is a character of $U(\mathbb{W}_v)$.
By the strong multiplicity one theorem for $\GL_1(\mathbb{D}_\mathbb{L})$, we conclude that
\[\Theta(\mathbf{1})=\mathbf{1}\boxtimes\omega_{\mathbb{L}/\mathbb{F}}. \]
By the local-global compatibility of theta correspondence, we have $\theta_{\psi }(\mathbf{1})=\mathbf{1}\boxtimes\omega_{L/F}$. Because $U(W_D)$ is a compact group, Howe duality theorem implies that
\[\Theta_\psi(\mathbf{1})=\theta_{\psi }(\mathbf{1})=\mathbf{1}\boxtimes\omega_{L/F}. \]
Then we are done.
\end{proof}

Now we start to prove Theorem \ref{localmain}.
\subsection*{Proof of Theorem \ref{localmain}}
We separate the proof  into four cases as follows:
\begin{itemize}
	\item $\tau$ is a supercuspidal representation, see (A);
	\item  $\tau$ is an irreducible principal series representation, see (B);
	\item $\tau$ is a Steinberg representation $St_E$, see (C);
	\item $\tau$ is a constituent of a reducible principle series $I(\chi)$ with $\chi^2=1,$ see (D).
\end{itemize}
\begin{enumerate}[(A).]
	\item 
If $\tau$ is supercuspidal, then 
there exists a character $\mu:K^\times\rightarrow\mathbb{C}^\times$
such that $\phi_\tau=i\circ (Ind_{W_K}^{W_E}\mu)$, where 
\begin{itemize}
	\item  $W_K$ is the Weil group of $K$, which is a quadratic field extension over $E$;
	\item $\mu$
	does not factor through the norm map $N_{K/E},$ so
	 the irreducible Langlands parameter $$Ind_{W_K}^{W_E}\mu:W_E\rightarrow\GL_2(\mathbb{C})$$ corresponds to a dihedral supercuspidal representation of $\GL_2(E)$ with respect to $K$;
	 \item $i:\GL_2(\mathbb{C})\rightarrow \PGL_2(\mathbb{C})$ is the projection map, which coincides with the adjoint map $$Ad:\GL(2)\rightarrow\SO(3).$$
\end{itemize}
 In fact, if $\tau=\theta_{\psi }(\Sigma)$,  where $\Sigma$ is a representation of $\Oo(\mathfrak{V}_E)$ and $\mathfrak{V}_E$ is a $2$-dimensional $E$-vector space of discriminant $K$, then the Langlands parameter $\phi$ of $\Sigma$ is given by
 \[\phi(g)=\begin{cases}
 \begin{pmatrix}
 \chi_K(g)\\&\chi_K^{-1}(g)
 \end{pmatrix}&\mbox{if } g\in W_K\\
 \begin{pmatrix}
 0&1\\1&0
 \end{pmatrix}&\mbox{if }g=s
 \end{cases} \]
 where $s\in W_E\setminus W_K$ and the character $\chi_K:W_K\rightarrow\mathbb{C}^\times$ is the pull back of a nontrivial character $\mu_1$ of $K^1$ under the  map $K^\times\rightarrow K^1$ via $k\mapsto k^sk^{-1}$, i.e. $\chi_K(k)=\mu_1(k^sk^{-1})$, see \cite[\S6.4]{kudla1996notes}. Furthermore, there is an isomorphism between two Langlands parameters of $\Oo(2)$
 \[\phi\otimes\omega_{K/E}\cong Ind_{W_K}^{W_E}\frac{\mu^s}{\mu}. \]
 In other words, one has $\chi_K=\mu^s\mu^{-1}$ and $\mu_1=\mu|_{K^1}$ is the restricted character.

Moreover, if $\mu_1^2\neq\mathbf{1}$, then $\tau=\theta_\psi(Ind_{\SO(\mathfrak{V}_E)}^{\Oo(\mathfrak{V}_E)}(\mu_1) )$.
If $\mu_1^2=\mathbf{1}$, then there are two extensions of $\mu_1$ from $\SO(\mathfrak{V}_E)$ to $\Oo(\mathfrak{V}_E)$, denoted by $\mu_1^\pm$. The theta lift of $\mu_1^+$ (resp. $\mu_1^-$) from $\Oo(\mathfrak{V}_E)$ to $\SL_2(E)$ is a tempered representation $\tau^+$ (resp. $\tau^-$).  For convenience, if $\mu_1^2\neq\mathbf{1}$, we use $\mu^+=\mu^-$ to denote $Ind_{\SO(\mathfrak{V}_E)}^{\Oo(\mathfrak{V}_E)}\mu_1$ as well.
Assume that
$\Theta_\psi(\mu_1^+ )$ is a supercuspidal representation of $\SL_2(E)$.
\par
If the discriminant $disc (\mathfrak{V}_E\otimes_ED)\in F^\times/(F^\times )^2$ is nontrivial,  by the see-saw diagram
\[\xymatrix{\tau^+\oplus\tau^-& \SL_2(E)\ar@{-}[rd] & U(W_D)\ar@{-}[ld] &\Theta_\psi(\mathbf{1} )\\ \mathbf{1}&\SL_1(D)& \SO(\mathfrak{V}_E)&{\mu_1} } \]
where $\tau^-=0$ if $\mu_1^2\neq\mathbf{1}$, one has an isomorphism
\[\Hom_{\SL_1(D)}(\tau^+\oplus\tau^-,\mathbb{C} )\cong \Hom_{\SO(\mathfrak{V}_E)}(\mathbf{1},\mu_1 ) \]
which is nonzero if and only if $\mu_1=\mathbf{1}.$ But $\Hom_{K^1}(\mathbf{1},\mu_1)=0$, then
 $\Hom_{\SL_1(D)}(\tau^\pm,\mathbb{C})=0$.
\par
If the discriminant of $\mathfrak{V}_E\otimes_ED$ is $1\in F^\times/(F^\times)^2 $, 
 we denote by $\mathcal{I}(s)$ the degenerate principal series of $U_{1,1}(D)$ and we assume that $F^\times/(F^\times)^2\supset \{1,u,\varpi,u\varpi \}$ and $E=F[\sqrt{\varpi} ]$ with associated Galois group $\Gal(E/F)=\langle\sigma\rangle$, $K=E[\sqrt{u}]$, then \eqref{pmseesaw} implies
\begin{equation}\label{openorbit}
\Hom_{\SL_1(D) }(\tau^+,\mathbb{C} )=\Hom_{\Oo(\mathfrak{V}_E)}(\mathcal{I}(\frac{1}{2} ),\mu_1^- )\cong  \Hom_{U(W')}(({\mu_1^-})^{-1},\mathbb{C} ) 
\end{equation} 
where $K$ is a quadratic unramified extension  over $E$,  $W'$   is a one-dimensional skew-Hermitian $D$-vector space  with discriminant $u$. 
In this case,  \eqref{openorbit} can be rewritten as the following identity
\begin{equation}\label{thesum}
\dim \Hom_{\SL_1(D)}(\tau^+,\mathbb{C} )=\dim \Hom_{U(W')}(\mu_1^-,\mathbb{C} ) 
\end{equation}
which is nonzero if and only if 
	 \begin{eqnarray} \label{omega}
	 \mu(x-y\sqrt{u})=\mu(x+y\sqrt{u})
	 \end{eqnarray} for $x,y\in F$. Similarly, if $\mu_1^2=\mathbf{1}$, one has
	 \[\dim\Hom_{\SL_1(D)}(\tau^-,\mathbb{C})=\dim\Hom_{U(W')}(\mu_1^+,\mathbb{C} ).  \]
\begin{rem} If the Hasse-invariant of $\mathfrak{V}_E$ is $-1$ and the discriminant of $\mathfrak{V}_E$ is $K$, then $$\dim\Hom_{\SL_1(D)}(\tau^+,\mathbb{C})\neq0$$ if and only if 
	\begin{equation}\label{uomega}
	\mu(x-y\sqrt{u\varpi})=\mu(x+y\sqrt{u\varpi}) 
	\end{equation} for $x,y\in F$. 
	Because $\mu^s\neq\mu$, \eqref{omega} and \eqref{uomega} can not hold at the same time unless $p=2.$
\end{rem}
\begin{lem}\label{siegelweil}
	Let $\mathfrak{V}_E$ be a $2$-dimensional quadratic $E$-vector space. Assume that $W_D=\mathfrak{V}_E\otimes_ED$ is a $2$-dimensional skew-Hermitian $D$-vector space of trivial discriminant and $\pi$ is an irreducible representation of $\Oo(\mathfrak{V}_E)$, then
	\begin{equation}\label{pmseesaw}
	\dim\Hom_{\SL_1(D)}(\Theta_\psi(\pi\otimes\det),\mathbb{C} )=\dim\Hom_{\Oo(\mathfrak{V}_E)}(\mathcal{I}(\frac{1}{2}),\pi ).
	\end{equation}
	where the big theta lift $\Theta_\psi(\pi\otimes\det)$ is under the splitting $i_1:\SL_2(E)\times\Oo(\mathfrak{V}_E)\rightarrow\Mp_8(F)$.
\end{lem}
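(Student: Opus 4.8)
The plan is to prove the identity in Lemma~\ref{siegelweil} by combining the see-saw identity from the previous subsection with the Siegel--Weil philosophy relating the doubling theta lift of the trivial representation to the degenerate principal series $\mathcal{I}(s)$ of $U_{1,1}(D)$. The key observation is that $\mathrm{disc}(W_D)=1$ puts us precisely in the situation where $U(W_D)=U_{1,1}(D)$ is quasi-split and possesses a Siegel parabolic $P$ with associated degenerate principal series $\mathcal{I}(s)$, and the relevant dual pair is $(U(W_D),U(V_D))$ with $U(V_D)\cong\SL_1(D)$. The right-hand side of \eqref{pmseesaw} is exactly the kind of branching multiplicity that the orbit stratification of $P\backslash U_{1,1}(D)/\Oo(\mathfrak{V}_E)$ is designed to compute.

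\textbf{Step 1: Set up the see-saw.} I would invoke the see-saw pair $(\Sp(\mathfrak{W}_E),\Oo(\mathfrak{V}_E))$ and $(U(W_D),U(V_D))$ in $\Sp(\mathbf{W})$, where now $\dim_D V_D=1$ and $\dim_E\mathfrak{V}_E=2$, so that $U(V_D)\cong\SL_1(D)$. Applying the see-saw identity to the trivial representation $\mathbb{C}$ of $\SL_1(D)$ and to $\pi$ of $\Oo(\mathfrak{V}_E)$ yields an isomorphism of the form
\[
\Hom_{\SL_1(D)}(\Theta_\psi(\pi),\mathbb{C})\cong\Hom_{\Oo(\mathfrak{V}_E)}(\Theta_\psi(\mathbb{C}),\pi),
\]
where on the right $\Theta_\psi(\mathbb{C})$ is the big theta lift of the trivial character of $\SL_1(D)=U(V_D)$ to $U(W_D)=U_{1,1}(D)$, restricted to $\Oo(\mathfrak{V}_E)$.

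\textbf{Step 2: Identify $\Theta_\psi(\mathbb{C})$ with $\mathcal{I}(1/2)$.} The substantive input is that the big theta lift of the trivial representation of $\SL_1(D)$ to $U_{1,1}(D)$ is realized inside the degenerate principal series $\mathcal{I}(s)$ at the distinguished point $s=1/2$, by the standard Siegel--Weil / Rallis computation of theta lifts of the trivial representation (the value $1/2$ coming from $\dim_D V_D=1$ normalized against the rank of $W_D$). This converts the right-hand $\Hom$-space into $\Hom_{\Oo(\mathfrak{V}_E)}(\mathcal{I}(1/2),\pi)$, which is the right-hand side of \eqref{pmseesaw}.

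\textbf{Step 3: Reconcile the splittings via the $\det$-twist.} The genuine obstacle — and the reason the statement carries $\pi\otimes\det$ rather than $\pi$ — is that the compatibility condition \eqref{seesawsplitting} on the two splittings $i_1,i_2$ \emph{fails} here, exactly as flagged by the reference to \cite[Lemma~4.3.7]{hengfei2017}. As computed just before the lemma, $(i_1^{-1}i_2)|_{\Oo(\mathfrak{V}_E)}$ corresponds to the quadratic character $\det$, so passing from the $i_2$-splitting (natural for the $(U(W_D),U(V_D))$ pair) to the $i_1$-splitting (natural for $\Sp(\mathfrak{W}_E)\times\Oo(\mathfrak{V}_E)\to\Mp_8(F)$, under which $\Theta_\psi$ in the statement is taken) twists $\pi$ by $\det$. \textbf{I expect this bookkeeping of the $\det$-twist to be the main obstacle}: one must track carefully on which group the determinant character lives and verify that the twist lands on the $\Oo(\mathfrak{V}_E)$-argument and not the $\SL_1(D)$-argument, so that replacing $\pi$ by $\pi\otimes\det$ on the left of \eqref{pmseesaw} exactly compensates. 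Once the splittings are matched, combining Steps~1--3 gives
\[
\dim\Hom_{\SL_1(D)}(\Theta_\psi(\pi\otimes\det),\mathbb{C})=\dim\Hom_{\Oo(\mathfrak{V}_E)}(\Theta_\psi(\mathbb{C}),\pi)=\dim\Hom_{\Oo(\mathfrak{V}_E)}(\mathcal{I}(\tfrac12),\pi),
\]
which is the claimed identity \eqref{pmseesaw}, with $\Theta_\psi$ understood under the splitting $i_1$ as stated.
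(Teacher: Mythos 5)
Your proposal is correct and takes essentially the same route as the paper: the paper's proof is precisely your Steps 1--3 compressed into a single chain of Hom-space equalities, with your Siegel--Weil input supplied by \cite[Theorem 1.3]{yamana2011deg} (giving $\Theta_\psi(\mathbf{1})=\mathcal{I}(\frac{1}{2})$ irreducible) and your splitting bookkeeping supplied by the computation $(i_1^{-1}i_2)|_{\Oo(\mathfrak{V}_E)}=\det$ from \S 2.2 together with the triviality of $i_1i_2^{-1}$ on $\SL_1(D)$. The only cosmetic difference is that the paper does not invoke the see-saw lemma as a black box and instead manipulates $\Hom_{i_j(\SL_1(D)\times\Oo(\mathfrak{V}_E))}(\omega_\psi,\cdot)$ directly, which is exactly how your det-twist correction gets absorbed.
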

\begin{proof}
	Let us fix the splitting $i_2:\SL_1(D)\times U(W_D)\rightarrow\Mp_8(F) $, then \cite[Theorem 1.3]{yamana2011deg} implies that $\Theta_\psi(\mathbf{1})=\mathcal{I}(\frac{1}{2})$ is an irreducible representation of $U(W_D)$. The splitting from $\SL_2(E)$ to $\Mp_8(F)$ is unique, so $i_1 i_2^{-1}$ is a quadratic character on $\SL_1(D)\times\Oo(\mathfrak{V}_E)$ and trivial on $\SL_1(D)$. Thus,
	\begin{eqnarray}
	\begin{split}
	\dim\Hom_{\SL_1(D)}(\Theta_\psi(\pi\otimes\det),\mathbb{C})&
	=\dim\Hom_{i_1(\SL_1(D)\times \Oo(\mathfrak{V}_E))}(\omega_\psi,\widetilde{\mathbb{C}}\otimes\widetilde{(\pi\otimes\det)})\\
	&=\dim\Hom_{i_2(\SL_1(D)\times\Oo(\mathfrak{V}_E))}(\omega_\psi,\widetilde{\mathbb{C}}\otimes\widetilde{\pi} )\\
	&=\dim\Hom_{\Oo(\mathfrak{V}_E)}(\Theta_\psi(\mathbf{1}),\pi) \\
	&=\dim\Hom_{\Oo(\mathfrak{V}_E)}(\mathcal{I}(\frac{1}{2}),\pi ). 
	\end{split}
	\end{eqnarray}
	where $\widetilde{\pi}(h,\epsilon)=\epsilon\cdot\pi(h)$ for $(h,\epsilon)\in\widetilde{\Oo(W)}$.
\end{proof}
	If $p\neq2
	$ and $\mu_1^2=\mathbf{1}$,  \eqref{thesum} implies that
		 if  $\mu|_{E'}$ factors through the norm map $N_{E'/F}$ for $E'\neq E$, there exists a supercuspidal representation $\tau$ of $\SL_2(E)$ distinguished by $\SL_1(D)$. Then $$ \dim\Hom_{\SL_1(D)} (\tau^+,\mathbb{C})=1=\dim\Hom_{\SL_1(D)}(\tau^-,\mathbb{C} ) .$$
		 In the $L$-packet containing a $\SL_1(D)$-distinguished representation $\tau$, half members in $\Pi_{\phi_\tau}$ are $\SL_1(D)$-distinguished and
		 \[\sum_{\tau'\in\Pi_{\phi_\tau}}\dim \Hom_{\SL_1(D)}(\tau',\mathbb{C} )=2. \] 
		 \par
		 If $p\neq2$ and $\mu_1^2\neq\mathbf{1}$, then $\dim \Hom_{\SL_1(D)}(\tau,\mathbb{C}) =\dim\Hom_{\Oo(\mathfrak{V}_E)}(\mathcal{I}(\frac{1}{2}),\mu_1^+)$
		 which equals to the sum $$\dim\Hom_{U(W')}(\mu_1,\mathbb{C})+\dim\Hom_{U(W')}(\mu_1^{-1},\mathbb{C} )=\begin{cases}
		 2,&\mbox{if }\mu|_{E'}=\chi_F\circ N_{E'/F},E'\neq E;\\
		 0,&\mbox{otherwise }. 
		 \end{cases} $$

\par
If $p=2$, there are two more cases.
\begin{enumerate}[(i).]
	\item  Suppose that there are two distinct quadratic fields $E'$ and $E''$ over $F$ such that $\mu|_{E'}=\chi_F'\circ N_{E'/F}$ and $\mu|_{E''}=\chi_F''\circ N_{E''/F}$. Furthermore,  $\frac{\chi_F'}{\chi_F''}$ is a quadratic character of $F^\times$ that is not trivial restricted on the Weil group $W_K$ of $K$, i.e. $\frac{\chi_F'}{\chi_F''}$ is different from three quadratic characters $\omega_{E/F}$, $\omega_{E'/F}$ and $\omega_{E''/F}$, 
	\[\mu(t)=\mu^s(t)\cdot\frac{\chi_F'}{\chi_F''}\Big|_{W_K}(t),~t\in W_K \]
	 which may happen only when $p=2$. In this case, we obtain 
$\dim \Hom_{\SL_1(D)}(\tau^+,\mathbb{C})=1$ by the identity \eqref{thesum}. 
 Suppose that $\tau$ is $\SL_1(D)$-distinguished, then the set $\{\dim\Hom_{\SL_1(D)}(\tau',\mathbb{C}) :\tau'\in\Pi_{\phi_{\tau^+}} \}$ is $\{1,1,1,1 \}$ and
 $$\sum_{\tau'\in\Pi_{\phi_\tau}}\dim\Hom_{\SL_1(D)}(\tau',\mathbb{C})=4. $$
 \begin{rem}
 	For the $\SL_2(F)$-distinction problem, the set of the multiplicities in the $L$-packet $\Pi_{\phi_\tau}$ is $\{4,0,0,0 \}$ in this case, see \cite{anandavardhanan2003distinguished,Lpacific}.
 \end{rem}
	\item A cuspidal representation $\pi$ of $\GL_2(E)$, which is not dihedral with respect to any quadratic extension $K$ over $E$,
	 is irreducible when restricted to $\SL_2(E)$. Suppose that $\tau=\pi|_{\SL_2(E)}$ is irreducible.
	\par
	We consider a $2$-dimensional skew-Hermitian $D$-vector space $X$  with trivial discriminant,  then  $U(X)=U_{1,1}(D)$ can be naturally embedded into the special orthogonal group $\SO(2,2)(E)$. Let $\pi\boxtimes\pi$ be the irreducible representation of the similitude special orthogonal group $\GSO(2,2)(E)$. By the property of the big theta lift $\Theta(\pi)$ from $\GL_2(E)$ to $\GSO(2,2)(E)$,  $$(\pi\boxtimes\pi)|_{\SO(2,2)(E)}=\Theta(\pi)|_{\SO(2,2)(E)}=\Theta(\pi|_{\SL_2(E)})=\Theta(\tau)$$ is irreducible since $\tau$ is supercuspidal. Suppose that $Y$ is a $2$-dimensional Hermitian $D$-vector space. Let $\mathfrak{I}(s)$ be the degenerate principal series of $U(Y)$. 
	Considering  the following see-saw diagram
	\[\xymatrix{\mathfrak{I}(\frac{1}{2})&U(Y)\ar@{-}[rd] & \SO(2,2)(E)&(\pi\boxtimes\pi
		)\\\pi|_{\SL_2(E)} &\SL_2(E)\ar@{-}[ru] &U_{1,1}(D)&\mathbb{C} } \]
	due to the structure of $\mathfrak{I}(\frac{1}{2})$  in
	\cite[Theorem 1.4]{yamana2011deg}, one can get an equality
	\[\dim \Hom_{\SL_2(E)}(\mathfrak{I}(\frac{1}{2}),\pi )=\dim \Hom_{U_{1,1}(D) }((\pi\boxtimes\pi
	)|_{\SO(2,2)(E)},\mathbb{C}). \]
	The supercuspidal representation $\pi|_{\SL_2(E)}$ does not occur on the boundary of $\mathfrak{I}(\frac{1}{2}),$ then
	\[\dim \Hom_{\SL_2(E) }(\mathfrak{I}(\frac{1}{2}),\pi )=\dim \Hom_{\SL_1(D)}(\pi^\vee,\mathbb{C} ). \]
	Hence 
	\begin{equation}
	\begin{split}
&	\dim \Hom_{\SL_1(D)}(\pi^\vee,\mathbb{C} )\\
	=&\dim\Hom_{U_{1,1}(D) }((\pi\boxtimes\pi)|_{\SO(2,2)(E)},\mathbb{C})\\
	=&\dim\Hom_{GU_{1,1}(D)}(\pi\boxtimes\pi,\mathbb{C} )+\dim\Hom_{GU_{1,1}(D)}(\pi\boxtimes\pi,\omega_{E/F}) \\
	=&\dim \Hom_{\GL_2(F)}(\pi,\mathbb{C} )\dim\Hom_{D^\times}(\pi,\mathbb{C})+\dim\Hom_{\GL_2(F)}(\pi,\omega_{E/F})\dim\Hom_{D^\times}(\pi,\omega_{E/F})  .
	\end{split}
	\end{equation}
	Therefore, if $\pi$ is not dihedral with respect to any quadratic field extension $K$ over $E$ and so $\tau=\pi|_{\SL_2(E)}$ is irreducible, then the following are equivalent:
	\begin{itemize}
		\item the Langlands parameter $\phi_\pi$ is  conjugate-self-dual in the sense of \cite[\S 3]{gan2011symplectic}; 
		\item $\dim \Hom_{\SL_1(D)}(\tau,\mathbb{C})=1.$
	\end{itemize}
\end{enumerate}
	\begin{rem}
	This method can be used to deal with the case when $\tau$ is the Steinberg representation $St_E$ of $\SL_2(E)$, which will imply  $\dim\Hom_{\SL_1(D)}(St_E,\mathbb{C})=1$ directly. 
\end{rem}
	\item 
Let $\chi$ be a unitary character of $E^\times$. Since there is only one orbit for $D^\times$-action on the projective variety $P(E)\backslash\GL_2(E)\cong B(E)\backslash \SL_2(E)$, where $P(E)$ is the Borel subgroup of $\GL_2(E)$, its stabilizer is isomorphic to $E^\times$ and $B(E)\backslash\SL_2(E)\cong E^\times\backslash D^\times$. There are two orbits for $\SL_1(D)$-action on $B(E)\backslash \SL_2(E)$.
If $\tau=I(z,\chi)=Ind_{B(E)}^{\SL_2(E) }\chi|-|_E^z$ (normalized induction) is an irreducible principal series, due to  the double coset decomposition
$$\SL_2(E)=B(E)\SL_1(D)\sqcup B(E)\eta \SL_1(D),$$
where $\eta=\begin{pmatrix}
z_1&\bar{z}_2\\z_2&\bar{z}_1/\epsilon
\end{pmatrix}$, $d=z_1+z_2j,z_1,z_2\in E$ and $N_{D/F}(d)=\epsilon\in F^\times\setminus N_{E/F}E^\times$,
 then there is an exact sequence
\begin{equation}\label{doublecoset}
\xymatrix{ 0\ar[r]&\Hom_{E^1}(\chi,\mathbb{C})\ar[r]& \Hom_{\SL_1(D)}(\tau,\mathbb{C} ) \ar[r]& \Hom_{E^1}(\chi,\mathbb{C} )\ar[r]&0 }
\end{equation}
where  $ E^1=\ker N_{E/F}$. 
Then $\dim\Hom_{\SL_1(D)}(\tau,\mathbb{C})=2 $ if and only if 
	 $\chi=\chi_F\circ N_{E/F} $.
\item
If $\tau=St_E$ is a Steinberg representation of $\SL_2(E)$, then  the exact sequence \eqref{doublecoset} implies that
\[\dim\Hom_{\SL_1(D)}(I(|-|_E),\mathbb{C})=2,\]  so that  $\dim \Hom_{\SL_1(D) }(St_E,\mathbb{C} )=2-1=1$. 
\item Assume that $\tau$ is tempered.
If $\tau\subset I(\omega_{K/E})$ is an irreducible constituent of a reducible principal series,  set $\chi=\omega_{K/E},~\chi^+(\omega)=1,\omega=\begin{pmatrix}
&1\\1
\end{pmatrix},$ then from \cite[Page 86]{kudla1996notes}, we can see that
\[I(\omega_{K/E} )=\theta_\psi(\chi^+ )\oplus\theta_\psi(\chi^-)\mbox{ where }\chi^-= \chi^+\otimes\det  \]
and $\tau^+=\theta_\psi(\chi^+)=\Theta_\psi(\chi^+),\tau^-=\theta_\psi(\chi^-)$, where  $\theta_\psi(\chi^\pm)$ is the theta lift of $\chi^\pm$ from
$\Oo_{1,1}(E)$ to $\SL_2(E).$ By \eqref{pmseesaw} and the see-saw diagram
\[\xymatrix{\tau^+& \SL_2(E)\ar@{-}[rd]&U_{1,1}(D)\ar@{-}[ld]&\mathcal{I}(1/2)\\\mathbb{C}&\SL_1(D)&\Oo_{1,1}(E)&\chi^+\otimes\det } \]
where $\mathcal{I}(s)$ is the principal series of $U_{1,1}(D)$,   one has an identity
\[\dim \Hom_{\SL_1(D) }(\tau^+,\mathbb{C} )=\dim \Hom_{\Oo_{1,1}(E)}(\mathcal{I}(\frac{1}{2} ),\chi^+\otimes\det) \]
which is equal to $$ \dim \Hom_{E^1}(\chi,\mathbb{C} )=\begin{cases}
1&\mbox{if }\chi=\chi_F\circ N_{E/F};\\0&\mbox{otherwise.}
\end{cases}$$
Similarly, one can prove $\dim\Hom_{\SL_1(D) }(\tau^+,\mathbb{C})=\dim\Hom_{\SL_1(D) }(\tau^-,\mathbb{C})$.
\end{enumerate}
Then we finish the proof of Theorem \ref{localmain}.
\begin{coro} Let $\tau$ be a $\SL_1(D)$-distinguished representation of $\SL_2(E)$.
	If the representation $\tau'$ lies in the $L$-packet $\Pi_{\phi_\tau}$, then $\dim\Hom_{\SL_1(D) }(\tau',\mathbb{C})$ is either $0$ or $\dim\Hom_{\SL_1(D) }(\tau,\mathbb{C})$.
\end{coro}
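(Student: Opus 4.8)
The plan is to read off the corollary from the explicit multiplicity computations assembled in the proof of Theorem \ref{localmain}, organizing everything by the size of the $L$-packet $\Pi_{\phi_\tau}$. Recall that an $L$-packet of $\SL_2(E)$ is the set of irreducible constituents of $\pi|_{\SL_2(E)}$ for an irreducible representation $\pi$ of $\GL_2(E)$, so that $|\Pi_{\phi_\tau}|\in\{1,2,4\}$. The first thing I would isolate is the following dichotomy, extracted from Theorem \ref{localmain}(i) together with cases (B)--(D): a distinguished member has multiplicity $2$ precisely when $\tau$ is square-integrable with $|\Pi_{\phi_\tau}|=2$ (or when $\tau$ is an irreducible principal series, whose packet is a singleton), and multiplicity $1$ in every other situation. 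The corollary then becomes the assertion that the distinguished members of a fixed packet all realize this one common value.

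First I would dispose of the singleton packets: if $|\Pi_{\phi_\tau}|=1$ --- as for the Steinberg representation $St_E$, for the restriction of a non-dihedral supercuspidal (the second subcase of (A)), and for an irreducible principal series (case (B)) --- then $\tau'=\tau$ and there is nothing to check. Next, for a square-integrable $\tau$ with $|\Pi_{\phi_\tau}|=2$, Theorem \ref{thmforsum}(i)(b) says that exactly one member of the packet is distinguished, necessarily $\tau$ itself with $\dim\Hom_{\SL_1(D)}(\tau,\mathbb{C})=2$; the remaining member $\tau'$ then has multiplicity $0$, so the claim holds. For the tempered non-discrete packets $\{\tau^+,\tau^-\}$ of case (D), the see-saw identity \eqref{pmseesaw} gives $\dim\Hom_{\SL_1(D)}(\tau^+,\mathbb{C})=\dim\Hom_{\SL_1(D)}(\tau^-,\mathbb{C})$, so the two members are simultaneously distinguished (each of multiplicity $1$) or simultaneously not; in either alternative the two multiplicities coincide.

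The remaining case is $|\Pi_{\phi_\tau}|=4$. Here Theorem \ref{localmain}(i) forces $\dim\Hom_{\SL_1(D)}(\tau',\mathbb{C})\in\{0,1\}$ for every $\tau'$ in the packet, since the value $2$ never occurs outside the size-$2$ square-integrable situation. Hence any distinguished $\tau'$ has multiplicity exactly $1=\dim\Hom_{\SL_1(D)}(\tau,\mathbb{C})$, which settles both the $p\neq2$ subcase (two distinguished members of equal multiplicity) and the $p=2$ subcase (where the multiplicity set is $\{1,1,1,1\}$ or has the two distinguished members paired via the see-saw as above). In each instance the number attached to a distinguished member is the common value $\dim\Hom_{\SL_1(D)}(\tau,\mathbb{C})$, and a non-distinguished member contributes $0$.

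The point demanding the most care --- and the genuine content behind what is otherwise bookkeeping --- is the uniqueness in the size-$2$ square-integrable case: one must know that the multiplicity-$2$ member is the \emph{only} distinguished member, so that no second member can contribute a different nonzero value. I would trace this back to the branching computation \eqref{thesum}, which writes each multiplicity as a sum of terms $\dim\Hom_{U(W')}(\mu_1^{\pm},\mathbb{C})$ over the compact torus $U(W')$, each term being $0$ or $1$; the observation following \eqref{omega}--\eqref{uomega} that the two Hasse-invariant choices for $\mathfrak{V}_E$ cannot both support the trivial branching when $\mu^s\neq\mu$ (for $p\neq2$) is exactly what pins the distinction onto a single member of the packet and thereby validates the corollary.
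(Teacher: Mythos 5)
Your proposal is correct and takes essentially the same route as the paper, which offers no separate argument for the corollary but simply reads it off from the case-by-case multiplicity computations in the proof of Theorem \ref{localmain} (singleton packets trivially, the see-saw identity \eqref{pmseesaw} for the packets $\{\tau^+,\tau^-\}$, and the supercuspidal analysis for packets of size $2$ and $4$). One minor quibble: the uniqueness of the distinguished member in the size-$2$ square-integrable case, which you single out as the essential point, is needed for Theorem \ref{thmforsum}(i)(b) but not for the corollary itself, since Theorem \ref{localmain}(i) applied to any hypothetically distinguished $\tau'$ in such a packet already forces $\dim\Hom_{\SL_1(D)}(\tau',\mathbb{C})=2=\dim\Hom_{\SL_1(D)}(\tau,\mathbb{C})$.
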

In fact, Theorem \ref{thmforsum} follows from the above arguments as well.

Let us display the results of the multiplicities for the $L$-packet $\Pi_{\phi_\tau}$ containing a $\SL_1(D)$-distinguished or $\SL_2(F)$-distinguished representation $\tau$ of $\SL_2(E)$ in the form of tables.
\par 
\begin{table}[h]
	\begin{center}
		\renewcommand*{\arraystretch}{1.5}
		\caption{Assume $\tau$ is square-integrable }
		\begin{tabular}{|c|c|c|c|}
			\hline 
			&$\SL_1(D)$-distinguished&$\SL_2(F)$-distinguished& the character $\mu$ of $K^\times,K=E[\sqrt{u}]$  \\ \hline
			$|\Pi_{\phi_\tau}|=1$&$\{1\}$&$\{1\} $& N.A. \\ \hline
			$|\Pi_{\phi_\tau}|=2$& $\{2,0 \}$&$\{2,0 \} $&$\mu(x-y\sqrt{u})=\mu(x+y\sqrt{u}),\mu_1^2\neq\mathbf{1}$ \\ \hline
			$|\Pi_{\phi_\tau}|=4$& $\{1,1,0,0\}$&$\{1,1,0,0 \}$&$\mu^s=-\mu$ and $\mu_1^2=\mathbf{1}$  \\ \cline{2-4}
			&$\{1,1,1,1 \}$&$\{4,0,0,0 \} $&$\mu^s=\mu\chi_F'/\chi_F''\neq\pm\mu$ and $\mu_1^2=\mathbf{1}$
			\\ \hline
		\end{tabular}
	\end{center}
\end{table}

The last row occurs only when $p=2$ and $\tau$ is a supercuspidal representation of $\SL_2(E)$.

\begin{table}[h]
\begin{center}
	\renewcommand*{\arraystretch}{1.5}
	\caption{Assume $\tau$ is not square-integrable}
	\begin{tabular}{|c|c|c|c|p{3cm}}
		\hline &$\SL_1(D)$-distinguished&$\SL_2(F)$-distinguished&the character $\chi_E$ of $E^\times$ \\ \hline
		$|\Pi_{\phi_\tau}|=1$& $\{2\}$&$\{2\} $&$\chi_E=\mathbf{1}$ \\ \cline{2-4}
		&$\{2\}$&$\{2\} $& $\chi_E=\chi_F\circ N_{E/F}$ and $\chi_E^2\neq\mathbf{1}$ \\ \cline{2-4}
		&$\{ 0\}$&$\{1\}$& $\chi_E|_{F^\times}=\mathbf{1}$ and $\chi_E^2\neq\mathbf{1}$
		 \\ \hline
		$|\Pi_{\phi_\tau}|=2$& $\{1,1\}$&$\{1,1 \}$&$\omega_{K/E}=\chi_F\circ N_{E/F}$ with $\chi_F^2=\omega_{E/F}$ \\ \cline{2-4}
		&$\{1,1 \}$&$\{3,0 \} $&$\omega_{K/E}=\chi_F\circ N_{E/F}$ with $\chi_F^2=\mathbf{1}$.
		\\ \hline
	\end{tabular}
\end{center}
\end{table}
\begin{rem}
	If $\tau=I(\chi_E)$ is an irreducible principle representation of $\SL_2(E)$, where $\chi_E$ is a unitary character of $E^\times$ with $\chi_E^2\neq\mathbf{1}$ and $\chi_E|_{F^\times}=\mathbf{1}$, then $I(\chi_E)$ is not $\SL_1(D)$-distinguished but $\SL_2(F)$-distinguished. It corresponds to the case that  the  representation
	$\pi=\pi(\chi,\chi\chi_E)$ of $\GL_2(E)$ with  $\chi|_{F^\times}=\mathbf{1}$, is not $\GL_1(D)$-distinguished but $\GL_2(F)$-distinguished. 
\end{rem}
\begin{rem}
	Assume that $\tau\subset I(\omega_{K/E})$, where $K$ is a quadratic field extension over $E$ associated with a quadratic character $\omega_{K/E}$ by the local class field theory. If $\omega_{K/E}|_{F^\times}=\mathbf{1}$, then $\omega_{K/E}^\sigma=\omega_{K/E}$ and so $\omega_{K/E}$ must factor through the norm map $N_{E/F}$. The third case of D from \cite[Page 490]{Lpacific} does not exist, i.e. the set $\{1,0\}$ does not appear in the above tables when $\tau$ is $\SL_2(F)$-distinguished. 
\end{rem}

\bibliographystyle{amsalpha}
\bibliography{SL(D)}
\end{document}